\documentclass[a4paper,12pt]{article}
\usepackage[latin1]{inputenc}
\usepackage{amssymb}
\usepackage{graphicx}
\usepackage{amsmath}
\usepackage{calc}
\usepackage{eurosym}
\usepackage{anysize}
\usepackage{amsthm}
\usepackage{enumerate}
\usepackage{semtrans}
\usepackage{eufrak}
\usepackage{mathrsfs}
\usepackage{color}
\usepackage{pst-node, pst-plot}
\usepackage{pstricks}
\usepackage{dsfont}
\marginsize{2,5cm}{2,5cm}{2,2cm}{2,3cm}

\title{Generalized Interacting Urn Models}
\author{Micka\"el Launay \and Vlada Limic}

\definecolor{jaune} {cmyk}{0,0.4,1,0}%
\definecolor{bleu} {cmyk}{1,0,0,0}%

\newcommand{\N}{\mathbb{N}}

\newcommand{\R}{\mathbb{R}}

\newcommand{\RUPa}{{IUM}}
\newcommand{\RUP}{{Interacting Urn Model}} 

\newcommand{\iy}{\infty}
\renewcommand{\P}{\mathbb{P}}
\newcommand{\E}{\mathbb{E}}
\newcommand{\FF}{{\mathcal F}}
\newcommand{\bck}{\!\!\!}
\newcommand{\be}{{\mathbf{e}}}

\newcommand{\indica}[1]{\mathbf{1}_{\{#1\}}}

\begin{document}

\newtheorem{lemme}{Lemma}[section]
\newtheorem{theorem}[lemme]{Theorem}
\newtheorem{prop}[lemme]{Proposition}
\newtheorem{coro}[lemme]{Corollaire}
\newtheorem{conj}[lemme]{Conjecture}
\newtheorem{defi}[lemme]{Definition}
\newtheorem{ex}[lemme]{Exemples}

\maketitle

\begin{abstract}
Interacting urns with exponential reinforcement were introduced and studied in Launay (2011). As its parameter $\rho$ tends to $\iy$, this reinforcement mechanism converges to the ``generalized'' reinforcement, in which the probability of draw may be $0$ or $1$ for some of the colors, depending on the current configuration. For a single urn, the generalized reinforcement is easy to analyse. We introduce and study the generalized interacting urn model with two or more urns and two colors. Our results concern the law of the so-called non-conformist urns, and answer in the asymptotic sense one of the open questions from the above mentioned paper.
\end{abstract}

\section{Introduction}

A common feature of reinforced processes is the definition of transition probabilities in terms of a given \textit{reinforcement weight sequence} $(w_i)_{i\in\N\cup\{0\}}\in(\mathbb{R}_+)^{\N\cup\{0\}}$.
At any time $n$, if a reinforced process $X_n$ can jump to one of the $\ell$ states $x_1$, $x_2$, ..., $x_\ell$ that have already been visited respectively $I_1, I_2, \ldots, I_\ell$ times, then for each $1\leq k\leq \ell$ the probability that it chooses $x_k$ is
$$\mathbb{P}\left[\left.X_{n+1}=x_k\right|X_0, X_1,\dots,X_n\right]=\frac{w_{I_k}}{w_{I_1}+w_{I_2}+\dots+w_{I_\ell}}.$$ 

Since the reinforcement weight sequence takes values in $\R_+$, we can note that the above probability is in $(0,1)$, or equivalently, that it never takes the values $0$ or $1$. In this paper we generalize the set of possible reinforcement sequences by introducing for $i,j\in\N\cup\{0\}$ the possibility for $w_i$ to be ``infinitely larger'' than $w_j$, that is allowing the values $0$ and $1$ for the above probability. 
In this framework, we prove a limit theorem for the {\RUP} with exponential reinforcement, introduced by Launay \cite{launay1}.

The simplest classical reinforced process consists of a single reinforced urn. 
Without further mention we will assume here and later that the initial time is $0$, and that the urn is empty at time $0$.
Suppose that there are $C$ possible colors $\mathfrak{c}_1, \mathfrak{c}_2, \ldots, \mathfrak{c}_C$.
At time $n$, let there be $N_k\geq 0$ balls of color $\mathfrak{c}_k$, for each $k=1,\ldots, C$.  Then at time $n+1$ a new ball of color $\mathfrak{c}_k$ is added to the urn with probability
$$\frac{w_{N_k}}{\displaystyle \sum_{k^\prime=1}^C w_{N_{k^\prime}}}.$$
The first such process was introduced by P\'{o}lya in 1930 (\cite{polya}), and since it has been generalized in a number of ways, see for example the survey by Pemantle \cite{pemantle}.

\begin{center}
	\pspicture(0,-.3)(14,2.4)

\psline[linecolor=gray]{-*}(1.7,.3)(2.5,1)
\psline[linecolor=gray]{-*}(1.7,2)(2.5,1.6)
			\pspolygon[fillstyle=solid,fillcolor=lightgray, linewidth=1pt, linecolor=lightgray, linearc=.2](.0,.0)(0,1.2)(.6,1.2)(.6,.6)(1.8,.6)(1.8,0)(0,0)
			\pspolygon[fillstyle=solid,fillcolor=white, linewidth=1pt, linecolor=gray, linearc=.2](0,1.25)(0,2.3)(1.8,2.3)(1.8,.7)(.65,.7)(.65,1.25)(0,1.25)

\rput[Bl](2.7,.9){4 black balls}
\rput[Bl](2.7,1.5){8 white balls}

\psline(-.1,-.1)(1.9,-.1)
\psline(-.1,-.1)(-.1,2.3)
\psline(1.9,-.1)(1.9,2.3)
			\pscircle[fillstyle=solid,fillcolor=black, linewidth=1pt,linecolor=black](.3,.3){.2}
			\pscircle[fillstyle=solid,fillcolor=black, linewidth=1pt,linecolor=black](.9,.3){.2}
			\pscircle[fillstyle=solid,fillcolor=black, linewidth=1pt,linecolor=black](1.5,.3){.2}
			\pscircle[fillstyle=solid,fillcolor=black, linewidth=1pt,linecolor=black](.3,.9){.2}
			\pscircle[fillstyle=solid,fillcolor=white, linewidth=1pt,linecolor=black](.9,1){.2}
			\pscircle[fillstyle=solid,fillcolor=white, linewidth=1pt,linecolor=black](1.5,1){.2}
			\pscircle[fillstyle=solid,fillcolor=white, linewidth=1pt,linecolor=black](.3,1.5){.2}
			\pscircle[fillstyle=solid,fillcolor=white, linewidth=1pt,linecolor=black](.9,1.5){.2}
			\pscircle[fillstyle=solid,fillcolor=white, linewidth=1pt,linecolor=black](1.5,1.5){.2}
			\pscircle[fillstyle=solid,fillcolor=white, linewidth=1pt,linecolor=black](.3,2){.2}
			\pscircle[fillstyle=solid,fillcolor=white, linewidth=1pt,linecolor=black](.9,2){.2}
			\pscircle[fillstyle=solid,fillcolor=white, linewidth=1pt,linecolor=black](1.5,2){.2}
			
\rput[Bl](5.5,1.7){{${\displaystyle \mathbb{P}[\text{a black ball is drawn in this urn}]=\frac{w_4}{w_8+w_4}}$}}
\rput[Bl](5.5,.6){{${\displaystyle \mathbb{P}[\text{a white ball is drawn in this urn}]=\frac{w_8}{w_8+w_4}}$}}
	\endpspicture
	\end{center}

\noindent	
{\footnotesize Figure 1:
Transition probabilities for a particular realization of a single $w$-urn at time 12.}
\setcounter{figure}{1}

\vspace{0.3cm}
\noindent

As already mention, the first author recently introduced a multiple urn model, called the {\RUP} (IUM), in which the urns interact through shared memory. Suppose that a number $U\in\N$ of urns is given, and let us fix the interaction parameter $p\in [0,1]$. The dynamics is given as follows:
at each time $n$, each urn draws independently a ball either from itself with probability $1-p$, or from all the urns combined with probability $p$. In other words, the higher the $p$ the more memory is shared between the urns. To be more precise, suppose that at time $n$, for each $k=1,\ldots, C$ and $u=1,\ldots, U$, there is $N^u_k$ balls of color $\mathfrak{c}_k$ in the urn number $u$. Then for each $u$, at time $n+1$ we add a new ball of color $\mathfrak{c}_k$ in the urn $u$ with probability
$$(1-p)\frac{w_{N_k^u}}{\displaystyle \sum_{k^\prime=1}^C w_{N_{k^\prime}^u}}+p\frac{w_{N_k^*}}{\displaystyle \sum_{k^\prime=1}^C w_{N_{k^\prime}^*}},$$
where $N_k^*=\sum_{1\leq v\leq U}N_{k^\prime}^v$ is the total number of balls of color $\mathfrak{c}_k$ in the configuration at time $n$.

\begin{center}

\pspicture(0,-.6)(16,2.5)
\scalebox{0.75}{
\psline(-0.1,0)(1.6,0)
\psline(-0.1,0)(-0.1,2)
\psline(1.6,0)(1.6,2)
			\pscircle[fillstyle=solid,fillcolor=black, linewidth=1pt,linecolor=black](.25,.3){.23}
			\pscircle[fillstyle=solid,fillcolor=black, linewidth=1pt,linecolor=black](.75,.3){.23}
			\pscircle[fillstyle=solid,fillcolor=black, linewidth=1pt,linecolor=black](1.25,.3){.23}
			\pscircle[fillstyle=solid,fillcolor=black, linewidth=1pt,linecolor=black](.25,.8){.23}
			\pscircle[fillstyle=solid,fillcolor=white, linewidth=1pt,linecolor=black](.75,.8){.23}
			\pscircle[fillstyle=solid,fillcolor=white, linewidth=1pt,linecolor=black](1.25,.8){.23}
			\pscircle[fillstyle=solid,fillcolor=white, linewidth=1pt,linecolor=black](.25,1.3){.23}
			\pscircle[fillstyle=solid,fillcolor=white, linewidth=1pt,linecolor=black](.75,1.3){.23}
			\pscircle[fillstyle=solid,fillcolor=white, linewidth=1pt,linecolor=black](1.25,1.3){.23}
			\pscircle[fillstyle=solid,fillcolor=white, linewidth=1pt,linecolor=black](.25,1.8){.23}
			\pscircle[fillstyle=solid,fillcolor=white, linewidth=1pt,linecolor=black](.75,1.8){.23}
			\pscircle[fillstyle=solid,fillcolor=white, linewidth=1pt,linecolor=black](1.25,1.8){.23}

\psline(1.9,0)(3.6,0)
\psline(1.9,0)(1.9,2)
\psline(3.6,0)(3.6,2)
			\pscircle[fillstyle=solid,fillcolor=black, linewidth=1pt,linecolor=black](2.25,.3){.23}
			\pscircle[fillstyle=solid,fillcolor=white, linewidth=1pt,linecolor=black](2.75,.3){.23}
			\pscircle[fillstyle=solid,fillcolor=white, linewidth=1pt,linecolor=black](3.25,.3){.23}
			\pscircle[fillstyle=solid,fillcolor=white, linewidth=1pt,linecolor=black](2.25,.8){.23}
			\pscircle[fillstyle=solid,fillcolor=white, linewidth=1pt,linecolor=black](2.75,.8){.23}
			\pscircle[fillstyle=solid,fillcolor=white, linewidth=1pt,linecolor=black](3.25,.8){.23}
			\pscircle[fillstyle=solid,fillcolor=white, linewidth=1pt,linecolor=black](2.25,1.3){.23}
			\pscircle[fillstyle=solid,fillcolor=white, linewidth=1pt,linecolor=black](2.75,1.3){.23}
			\pscircle[fillstyle=solid,fillcolor=white, linewidth=1pt,linecolor=black](3.25,1.3){.23}
			\pscircle[fillstyle=solid,fillcolor=white, linewidth=1pt,linecolor=black](2.25,1.8){.23}
			\pscircle[fillstyle=solid,fillcolor=white, linewidth=1pt,linecolor=black](2.75,1.8){.23}
			\pscircle[fillstyle=solid,fillcolor=white, linewidth=1pt,linecolor=black](3.25,1.8){.23}

\psline(3.9,0)(5.6,0)
\psline(3.9,0)(3.9,2)
\psline(5.6,0)(5.6,2)
			\pscircle[fillstyle=solid,fillcolor=black, linewidth=1pt,linecolor=black](4.25,.3){.23}
			\pscircle[fillstyle=solid,fillcolor=black, linewidth=1pt,linecolor=black](4.75,.3){.23}
			\pscircle[fillstyle=solid,fillcolor=black, linewidth=1pt,linecolor=black](5.25,.3){.23}
			\pscircle[fillstyle=solid,fillcolor=black, linewidth=1pt,linecolor=black](4.25,.8){.23}
			\pscircle[fillstyle=solid,fillcolor=black, linewidth=1pt,linecolor=black](4.75,.8){.23}
			\pscircle[fillstyle=solid,fillcolor=black, linewidth=1pt,linecolor=black](5.25,.8){.23}
			\pscircle[fillstyle=solid,fillcolor=black, linewidth=1pt,linecolor=black](4.25,1.3){.23}
			\pscircle[fillstyle=solid,fillcolor=black, linewidth=1pt,linecolor=black](4.75,1.3){.23}
			\pscircle[fillstyle=solid,fillcolor=white, linewidth=1pt,linecolor=black](5.25,1.3){.23}
			\pscircle[fillstyle=solid,fillcolor=white, linewidth=1pt,linecolor=black](4.25,1.8){.23}
			\pscircle[fillstyle=solid,fillcolor=white, linewidth=1pt,linecolor=black](4.75,1.8){.23}
			\pscircle[fillstyle=solid,fillcolor=white, linewidth=1pt,linecolor=black](5.25,1.8){.23}			
\rput[Bl](0.3,-.5){urn 1}
\rput[Bl](2.3,-.5){urn 2}
\rput[Bl](4.2,-.5){urn 3}
}			
\rput[Bl](4.9,2){{${\displaystyle \mathbb{P}[\text{urn 1 draws a black ball}]=(1-p)\frac{w_4}{w_8+w_4}+p\frac{w_{13}}{w_{23}+w_{13}}},$}}
\rput[Bl](4.9,.9){{${\displaystyle \mathbb{P}[\text{urn 2 draws a black ball}]=(1-p)\frac{w_1}{w_{11}+w_1}+p\frac{w_{13}}{w_{23}+w_{13}}},$}}
\rput[Bl](4.9,-.2){{${\displaystyle \mathbb{P}[\text{urn 2 draws a black ball}]=(1-p)\frac{w_8}{w_4+w_8}+p\frac{w_{13}}{w_{23}+w_{13}}}.$}}

\endpspicture
\end{center}

\noindent
{\footnotesize Figure 2:
Transition probabilities for a particular realization of the {\RUPa} with $U=3$ at time 12.}
\setcounter{figure}{2}

\vspace{0.3cm}
\noindent
For this model, it is proved in \cite{launay1} that if 
$\liminf_{i\to\iy}w_{i+1}/w_{i}>1$, and in particular if $w_i=\rho^i$ with $\rho>1$, then
\begin{itemize}
\item if $p\geq 1/2$ all the $U$ urns eventually fixate on the same color ;
\item if $p < 1/2$ there exist two colors $\mathfrak{c}_1$ and $\mathfrak{c}_2$ such that a
certain number 
of urns eventually draw a ball of color $\mathfrak{c}_1$ each time they draw out of all the urns combined and draw a ball of color $\mathfrak{c}_2$ otherwise, while all the 
other urns fixate on $\mathfrak{c_1}$. 
The number $N$ of urns that keep drawing balls of both colors is random and observes a deterministic bound 
$N < U/(2-2p)$, almost surely (see \cite{launay1}).
\end{itemize}
An open question from \cite{launay1} was to determine the law of $N$. 
The generalized reinforcement weights, will allow us to answer this question (see Theorem \ref{theorem} below) in the asymptotic regime $\rho \to \iy$, provided that $U=2$ or that $U$ is an odd number.

So let us now define the set of {\it generalized reinforcement weight sequence} as
$$\mathcal{W}:=\left(\R_+\times\R\right)^{\N\cup\{0\}}.$$
For $w=(w_i)_{i\in\N\cup\{0\}}\equiv (u_i,v_i)_{i\in\N\cup\{0\}}\in\mathcal{W}$ in the following, we shall use the abbreviated formal notation:
$$w_i=(u_i,v_i)=:u_i\iy^{v_i},~~\forall i\in\N\cup\{0\}.$$
The symbol $\iy$ in this notation is meant to recall the reader of the approximation 
$\rho^i \approx \iy^i$ in the sense of Theorem \ref{T:approx} below.

If $a\in\N\cup\{0\}$ and $B=(b_1,b_2,\dots,b_\ell)\in \bigcup_{m\in\N}{\left(\N\cup\{0\}\right)}^m$ then we define
\begin{equation*}
\pi_{w}(a,B)=\pi_{w}(a,b_1,b_2,\dots,b_\ell)=\left\{
\begin{array}{ll}
0&\qquad \displaystyle \text{if}\quad v_a<\max_{1\leq k\leq \ell}{v_b};\\
\displaystyle \frac{u_a}{\displaystyle u_a+\sum_{1\leq k\leq \ell}u_{b_k}\, \delta_{v_a}(v_{b_k})}&\qquad \text{if}\quad \displaystyle v_a=\max_{1\leq k\leq \ell}{v_b};\\
1&\qquad \text{if}\quad \displaystyle v_a>\max_{1\leq k\leq \ell}{v_b}.
\end{array}
\right.
\end{equation*}
This quantity should be interpreted as the probability to draw a black ball in an urn that contains $a$ black balls and $b_k$ balls of color $\mathfrak{c}_k$ (different of black) for each $k\in\{1,\dots,\ell\}$. In particular the probability to draw a black ball among $a$ black and $b$ white balls is:
\begin{equation*}
\pi_{w}(a,b):=\left\{
\begin{array}{ll}
0&\qquad \text{if}\quad v_a<v_b;\\
\frac{u_a}{u_a+u_b}&\qquad \text{if}\quad v_a=v_b;\\
1&\qquad \text{if}\quad v_a>v_b.
\end{array}
\right.
\end{equation*}
When clear from the context we will simply write $\pi(a,b)=\pi_{w}(a,b)$. 

\textbf{Remark} Note that a classical reinforcement weight sequence $(w_i)_{i\in\N\cup\{0\}}\in(\R_+)^{\N\cup\{0\}}$, could be identified with a generalized reinforcement weight sequence $(u_i\iy^{v_i})_{i\in\N\cup\{0\}}$ where $u_i=w_i$ and $v_i$ is constant.

For reasons of simplicity, we will consider from now on the above urn processes with only two colors, black and white. In the final section we make some remarks about the setting of three or more colors.

Let us first state some easy to derive facts on the behaviour of a single urn that corresponds to the above generalized reinforcement weight sequence.
\begin{lemme}
\label{L:single urn}
(a) If $v_i$ is increasing, the color of the first ball is chosen uniformly at random, and then all the balls drawn have the same color almost surely.\\
(b) If $v_i$ is decreasing, the color of the ball drawn at even times is chosen uniformly at random, and for each $k\in \N$ the color drawn at time $2k+1$ is almost surely different from that drawn at time $2k$.\\
(c) For each $i_0$ such that $v_{i_0}<v_i$ for all $i<i_0$, there is almost surely $i_0$ black balls and $i_0$ white balls in the urn at time $2i_0$.
\end{lemme}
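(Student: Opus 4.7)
The key observation is that at any state $(a,b)$, the definition of $\pi_w$ forces the next draw to be black if $v_a > v_b$, forces it to be white if $v_a < v_b$, and yields a uniform $1/2$-$1/2$ choice exactly when $v_a = v_b$. So randomness enters only through ties in the $v$-values, and all three parts follow from careful bookkeeping about when such ties can occur.

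For (a), I would argue by induction on the number of draws. Since $v_0 = v_0$, the first ball is uniform; suppose without loss of generality that it is black, so we are at $(1,0)$. For every $k \geq 1$, the strict monotonicity hypothesis gives $v_k > v_0$, so at state $(k,0)$ the next ball is forced to be black and we move to $(k+1,0)$. Symmetric argument if the first draw is white.

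For (b), I would prove by induction on $k$ that the state at time $2k$ is $(k,k)$. The base case $k=0$ is just the initial configuration. For the step, at $(k,k)$ we have $v_k = v_k$, so the draw at time $2k$ is uniform; if it is black we arrive at $(k+1,k)$, and since $v_i$ is strictly decreasing, $v_{k+1} < v_k$ forces the draw at time $2k+1$ to be white, bringing us back to the diagonal at $(k+1,k+1)$. Because this forced draw is almost surely opposite to the uniform draw at time $2k$, both assertions of (b) follow.

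For (c), I would establish the invariant that at every time $t \in \{0,1,\ldots,2i_0\}$ the state $(a,b)$ satisfies $a \leq i_0$ and $b \leq i_0$; the conclusion at $t = 2i_0$ is then forced by $a+b = 2i_0$ and $a, b \leq i_0$. The invariant is verified by induction on $t$, with three cases. If both $a, b < i_0$ then either possible next state is still in the square. If $a = i_0$ and $b < i_0$ (the boundary case), then the assumption on $i_0$ gives $v_{i_0} < v_b$, so a white draw is forced and we move to $(i_0, b+1)$ which still lies in the square; the case $b = i_0$, $a < i_0$ is symmetric. The slightly delicate point, and the only place where the hypothesis on $i_0$ is actually used, is this boundary step; otherwise the argument is purely mechanical.
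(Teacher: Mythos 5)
Your proof is correct, and for part (c) it is essentially the paper's argument in a slightly different packaging: the paper singles out the first hitting time $\tau_{i_0}$ at which one color reaches count $i_0$ and observes that all draws from $\tau_{i_0}$ up to $2i_0-1$ are then forced to be the other color; you instead maintain the invariant $a\leq i_0,\ b\leq i_0$ step by step, with the boundary cases $a=i_0$ or $b=i_0$ encoding exactly the same forced-draw observation. Parts (a) and (b) are declared easy and left to the reader in the paper, and your inductive treatment is the expected one (noting only that "increasing" and "decreasing" there must be read as strict for the statements to hold, which your proof correctly uses).
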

\begin{proof}
Statements (a)--(b) are easy and left to the reader.
For (c) denote by $\tau_{i_0}$ the first (random) time at which one of the colors is drawn
exactly $i_0$ times. 
Recall that the urn is empty at the initial time $0$.
Suppose WLOG that this color is black. Then clearly $\P(i_0\leq \tau_{i_0}\leq 2i_0-1) =1$ and the number of white balls drawn at time $\tau_{i_0}$ is precisely 
$\tau_{i_0}-i_0 <i_0$.
Due to the above assumption on the sequence $(v_i)_i$, at time
$\tau_{i_0}$ the urn is bound to draw white, and 
similarly at each time $\{\tau_{i_0}+1,\ldots, 2i_0-1\}$ 
the urn will draw white ball almost surely, which implies the stated claim.
\end{proof}

Davis \cite{davis} proved the well-known fact: 
a single urn with a classical reinforcement weight sequence 
$(w_i)_{i\in\N\cup\{0\}}\in (\R_+)^{\N\cup\{0\}}$ eventually fixates 
on a single color if and only if $\sum_{i=0}^\iy w_i^{-1}<\iy$. 
The following proposition extends this result in the setting of generalized reinforcement weight sequences.

\begin{prop} If the sequence $(v_i)_{i\in\N\cup\{0\}}$ attains its minimum $\underline{v}:=\min_{i\in\N\cup\{0\}}v_i$ and
$$\sum_{i=0}^\iy\frac{\indica{v_i=\underline{v}}}{u_i}<\iy,$$
then the urn eventually fixates on a single color almost surely. 
Otherwise, both black and white balls are drawn infinitely often almost surely.
\end{prop}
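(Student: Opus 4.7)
The plan is to split the proof according to whether $\underline v:=\inf_i v_i$ is attained, and in the attained case to reduce the problem to Davis's classical criterion via an embedded Markov chain.

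If $\underline v$ is not attained, I would rule out fixation directly. From any state $(a,b)$, the probability of drawing black forever equals $\prod_{j\geq 0}\pi(a+j,b)$, which is positive only if $v_{a+j}\geq v_b$ for every $j\geq 0$, i.e.\ $\inf_{i\geq a}v_i\geq v_b$. A short argument shows $\inf_{i\geq a}v_i=\underline v$ for every $a$ (otherwise $\underline v$ would be realized on the finite set $\{0,\dots,a-1\}$), which forces $v_b\leq\underline v$ and contradicts non-attainment. Hence black fixation has probability zero from every reachable state, symmetrically for white, so both colors are drawn infinitely often almost surely.

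When $\underline v$ is attained, enumerate $I:=\{i:v_i=\underline v\}=\{i_1<i_2<\cdots\}$ (possibly finite). Since $v_{i_1}=\underline v<v_i$ for every $i<i_1$, Lemma~\ref{L:single urn}(c) places the urn in state $(i_1,i_1)$ almost surely at time $2i_1$. I would then introduce $W:=\{(i_k,i_j):k,j\geq 1\}$ and the Markov chain obtained by observing the urn at its successive visits to $W$. The key structural observation is that from $(i_k,i_j)\in W$ the next draw yields black with probability $u_{i_k}/(u_{i_k}+u_{i_j})$, after which any index strictly between two consecutive elements of $I$ has $v_i>\underline v$, so the urn force-draws the chosen color until the corresponding count reaches the next element of $I$ and thereby returns to $W$. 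Thus the embedded chain has transitions $(i_k,i_j)\to(i_{k+1},i_j)$ with probability $u_{i_k}/(u_{i_k}+u_{i_j})$ and $(i_k,i_j)\to(i_k,i_{j+1})$ with probability $u_{i_j}/(u_{i_k}+u_{i_j})$, which is exactly a classical two-color P\'olya urn with weight sequence $(u_{i_k})_{k\geq 1}$. Applying Davis's theorem, recalled just above the proposition, to this embedded urn yields almost sure fixation iff $\sum_{k\geq 1}1/u_{i_k}<\infty$---precisely $\sum_i\indica{v_i=\underline v}/u_i<\infty$. Since each excursion outside $W$ consists of draws of a single color, fixation of the embedded chain and of the original urn coincide.

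The main obstacle is verifying the embedding step rigorously: one must check that between successive visits to $W$ only finitely many force-draws occur whenever the relevant next element of $I$ exists, and that the strong Markov property delivers the product-form transition probabilities stated above. When $I$ is finite a minor separate treatment is required---the summability condition is trivially satisfied, and the sub-chain is absorbed by an infinite run of force-draws at its boundary, again yielding fixation.
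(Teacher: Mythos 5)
Your argument is correct, and in the attained case it reaches the conclusion by a route that is related to, but packaged differently from, the paper's. The paper and you share the central observation that once one of the two counts leaves the level set $I=\{i:v_i=\underline v\}$, the urn force-draws deterministically until the count re-enters $I$, so the only genuinely random transitions occur on the index set $I$ and there they reproduce the dynamics of a classical two-color urn with weights $(u_{i_k})_k$. You exploit this by formalizing an embedded Markov chain on $W=\{(i_k,i_j)\}$, checking that excursions out of $W$ are monochromatic and of deterministic finite length, and then invoking Davis's criterion as a black box; the paper instead carries out Rubin's exponential time-line construction directly, declaring those same steps ``deterministic'' and placing Exponential$(u_{\alpha(i)})$ clocks only at indices $\alpha(i)\in I$. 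Your version has the advantage of not requiring the reader to unpack Rubin's embedding---once the transition kernel of the embedded chain is identified as that of a classical $\tilde w$-urn with $\tilde w_k=u_{i_k}$, Davis's if-and-only-if statement immediately delivers both halves of the claim at once. The paper's time-line presentation makes the underlying coupling more explicit and is more in keeping with how such criteria are usually proved from scratch, but for the purposes of this proposition the two arguments are interchangeable. In the non-attained case your approach is genuinely more elementary than the paper's: rather than appealing to Lemma~\ref{L:single urn}(c) to force infinitely many returns to equal counts, you bound the probability of eventual monochromatic drawing from every reachable state $(a,b)$ by the deterministic product $\prod_{j\geq0}\pi(a+j,b)$, observe that non-attainment of $\underline v$ forces some factor in that product to vanish, and conclude by countable additivity. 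Both the needed strong Markov step and the treatment of a finite $I$, which you flag as the remaining work, are routine and match what the paper leaves implicit.
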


\begin{proof}
First note that if $(v_i)_{i\in\N\cup\{0\}}$ does not attain its infimum, 
then there are infinitely many indices $j\in\N$ such that $v_j<v_i$ for all $i<j$. 
Then using Lemma \ref{L:single urn}(c), for each such $j$, there are $j$ balls of each color in the urn at time $2j$. 
We can therefore conclude that almost surely, balls of both colors are drawn infinitely often.

On the contrary, if $(v_i)_{i\in\N\cup\{0\}}$ attains its minimum, denote by $\alpha(0)$, $\alpha(1)$, $\alpha(2)$... the (finite or infinite) sequence of subscripts such that $v_{\alpha(i)}=\underline{v}:=\min_{j\in\N\cup\{0\}}v_j$, for each $i$. 
Let us suppose first that this sequence is infinite.
We can then use the classical time line argument of Rubin.
\begin{center}
	\pspicture(0,-.6)(10,1.6)

\psline{|-}(0,0)(1,0)
\psline{|-}(1,0)(3,0)
\psline{|-}(3,0)(3.7,0)
\psline{|-}(3.7,0)(7,0)
\psline{|-}(7,0)(9.1,0)
\psline{|->}(9.1,0)(10,0)
\rput(.5,.3){{$\mathcal{E}^\prime(0)$}}
\rput(2,.3){{$\mathcal{E}^\prime(1)$}}
\rput(3.35,.3){{$\mathcal{E}^\prime(2)$}}
\rput(5.35,.3){{$\mathcal{E}^\prime(3)$}}
\rput(8.05,.3){{$\mathcal{E}^\prime(4)$}}

\psline{|-}(0,1)(.5,1)
\psline{|-}(.5,1)(5.1,1)
\psline{|-}(5.1,1)(6,1)
\psline{|-}(6,1)(7.8,1)
\psline{|->}(7.8,1)(10,1)
\rput(.25,1.3){{$\mathcal{E}(0)$}}
\rput(2.8,1.3){{$\mathcal{E}(1)$}}
\rput(5.55,1.3){{$\mathcal{E}(2)$}}
\rput(6.9,1.3){{$\mathcal{E}(3)$}}

\rput[Bl](-1.4,1){urn 1}
\rput[Bl](-1.4,0){urn 2}
	\endpspicture
	\end{center}

\noindent
{\footnotesize Figure 3:
Rubin's time line technique}
\setcounter{figure}{3}

\vspace{0.3cm}
\noindent
The time interval indicated by $\mathcal{E}(i)$ (or $\mathcal{E}'(i)$) in the figure has  length distributed as Exponential (rate $u_{\alpha(i)}$) random variable, for each $i\in \N$. Moreover, all the Exponential variables are mutually independent. 
Note that by Lemma \ref{L:single urn}(c), at time $2\alpha(0)$ there are almost surely $\alpha(0)$ black balls and $\alpha(0)$ white balls in the urn.
Then one can construct a realization of the urn process from the time-lines just as in Davis \cite{davis} (see e.g.~also \cite{attracom}) using the fact that if the current count of one color is at $i$ such that $v_i=\underline{v}$, and of the other color is at $j$ such that $v_j>\underline{v}$, then the urn ``deterministically'' draws the latter color on this time step. The time-lines are used to decide the next draw on steps where both counts are contained in $(\alpha(i))_{i\geq 0}$, that is when the draw is not deterministic.
	
If on the other hand, the sequence $\alpha(i)$ defined above is finite, let $\alpha^*$ be the maximum of $\alpha(i)$. Then the urn will find itself in the situation analogous to that of Lemma \ref{L:single urn}(a) after finitely many steps, in the sense that 
the color for which the count becomes first higher than $\alpha^*$ will be drawn forever after.
\end{proof}

Let us denote by $\mathfrak{U}:=\{\mathfrak{b},\mathfrak{w}\}^\N$ the set of all possible evolutions of an urn. If $\mathfrak{u}\in\mathfrak{U}$, then for $i\in\N$, $\mathfrak{u}(i)$ equals $\mathfrak{b}$ (resp. $\mathfrak{w}$) if the ball drawn at time $i$ is black (resp. white). Then $\mathfrak{U}^2$ is the set of all possible joint evolutions of the two urns. Let $F\subset \mathfrak{U}^2$ be the set of all evolutions of two urns such that both of them fixate on the same color, or more precisely;
$$F:=\{(\mathfrak{u},\mathfrak{v})\in\mathfrak{U}^2,~ \text{ such that }~\exists n_0,\forall n\geq n_0, \mathfrak{u}(n)=\mathfrak{v}(n)=\mathfrak{u}(n_0)\}$$

Let us now state the first main result of this paper.
From now on we consider the Generalized Interacting Urn Model (GIUM) with
two urns (that is, $U=2$). In the final section we address extensions to three or more urns.
\begin{theorem}\label{theorem} If $w_i=\iy^i$ (or equivalently $w_i=(1,i)$)
and if the interaction parameter $p\leq 1/2$, then
$$\mathbb{P}\left[F\right]=\frac{1-p+C(p)(3p-2)}{(1-p)(2-\lambda_{-}(p))}+C(p)$$
where
$$C(p)=-\frac{2 (1-p)^2 (1+p)}{2 p^3-6p^2+9p-4}\text{~~~and~~~}\lambda_{-}(p)=\frac{1-\sqrt{1-p^2\left(1-\frac{p}{2}\right)^2}}{2\left(1-p/2\right)^2}.$$
\end{theorem}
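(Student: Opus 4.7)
The plan is to recast the dynamics as a Markov chain on $\Z^2$ tracking the signed counts $\Delta_u:=(\text{black in urn }u)-(\text{white in urn }u)$ for $u=1,2$, started at $(0,0)$. Under $w_i=\iy^i$ the transition probabilities depend only on the signs of $\Delta_1$, $\Delta_2$, and $\Delta_1+\Delta_2$: a self-draw from urn $u$ is deterministic when $\Delta_u\neq 0$ and uniform when $\Delta_u=0$, and analogously for the global draw, which is driven by $\Delta_1+\Delta_2$. Partition $\Z^2$ accordingly. The \emph{same-sign} region $\{\Delta_1,\Delta_2>0\}\cup\{\Delta_1,\Delta_2<0\}$ is absorbing and contained in $F$. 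The \emph{mixed-sign, nonzero-sum} region (call it \emph{Case 1} when $\Delta_1+\Delta_2>0$) has urn $1$ deterministically adding black and urn $2$ drawing black with probability $p$, white with probability $1-p$; crucially, the sum $\Delta_1+\Delta_2$ is then nondecreasing, so once the chain enters Case 1 it stays forever in Case 1 $\cup$ same-sign. The mirror \emph{Case 3} ($\Delta_1+\Delta_2<0$) is analogous. The remaining states lie on the antidiagonal $\{\Delta_1+\Delta_2=0\}$.

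For the Case 1 sub-chain the probability of eventually reaching the same-sign region depends only on $\Delta_2$: write $g_k:=\P[F\mid\text{Case 1},\Delta_2=-k]$ for $k\ge 0$. While $\Delta_2<0$, urn $2$'s coordinate performs a random walk with $+1$-step probability $p$ and $-1$-step probability $1-p$; at $\Delta_2=0$ (self uniform, global forced black) the step is $+1$ with probability $(1+p)/2$ and $-1$ with probability $(1-p)/2$, and event $F$ is precisely the event that $\Delta_2$ ever reaches $1$. Solving the recurrence $g_k=p\,g_{k-1}+(1-p)\,g_{k+1}$ for $k\ge 1$ with the modified boundary $g_0=(1+p)/2+(1-p)g_1/2$, and using boundedness of $g_k$ (guaranteed by the negative drift for $p<1/2$), gives
\begin{equation*}
g_k=\frac{1+p}{2-p}\left(\frac{p}{1-p}\right)^k,\qquad k\ge 0.
\end{equation*}

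Finally, set $f_k:=\P[F\mid(\Delta_1,\Delta_2)=(k,-k)]$. Enumerating the four independent outcomes of one step from $(k,-k)$ yields, for every $k\ge 1$,
\begin{equation*}
f_k=(1-p/2)^2\,f_{k+1}+(p/2)^2\,f_{k-1}+2(1-p/2)(p/2)\,g_{k-1},
\end{equation*}
the $g_{k-1}$ terms coming from transitions into Case 1 or Case 3 (for $k=1$, a direct check confirms that the boundary states $(2,0)$ and $(0,-2)$ each contribute $g_0$). The step from $(0,0)$ gives $f_0=\tfrac12+\tfrac{f_1}{2}$. The associated homogeneous equation has characteristic roots $\lambda_\pm$ solving $(1-p/2)^2\lambda^2-\lambda+(p/2)^2=0$, with $\lambda_+>1>\lambda_->0$; boundedness of $f_k$ excludes the $\lambda_+^k$ mode. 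Writing $r=p/(1-p)$ and $\gamma=(1+p)/(2-p)$, the forcing $2(1-p/2)(p/2)\gamma\,r^{k-1}$ admits a particular solution $Cr^k$ (since $r\neq\lambda_-$ for $p\in(0,1/2]$), so $f_k=A\lambda_-^k+Cr^k$. Imposing the boundary condition at $k=0$ determines $A$ and gives
\begin{equation*}
\P[F]=f_0=\frac{1-C(2-r)}{2-\lambda_-}+C.
\end{equation*}
Substituting $2-r=(2-3p)/(1-p)$ rewrites the first fraction as $(1-p+C(3p-2))/((1-p)(2-\lambda_-))$, while an explicit computation of the particular-solution coefficient yields $C=-2(1-p)^2(1+p)/(2p^3-6p^2+9p-4)=C(p)$. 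The main obstacle is this last algebraic simplification (in particular the identity $r-(1-p/2)^2r^2-(p/2)^2=-p(2p^3-6p^2+9p-4)/(4(1-p)^2)$) needed to match the form stated in the theorem; the setup of the chain and the gambler's-ruin computation of $g_k$, while conceptually central, are routine.
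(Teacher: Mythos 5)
Your proof is correct and follows the paper's decomposition almost exactly: your ``antidiagonal / mixed-sign-nonzero-sum / same-sign'' regions are precisely the configurations $C_1(\ell)$, $C_2(\ell)$, $C_3$ of the paper, $g_k$ is the paper's $r_\ell$, $f_k$ is the paper's $q_\ell$, and the recurrences you write coincide term by term with equations (\ref{eqn_p0})--(\ref{eqn_r2}) (note $2(1-p/2)(p/2)=p(1-p/2)$). Where you diverge is only in how the linear recurrence for $f_k$ is closed: the paper introduces the exponential generating function $f_p(x)=\sum_\ell q_\ell x^\ell/\ell!$, converts to the ODE (\ref{ODE}), and reads off the answer from its characteristic polynomial and a particular solution; you instead solve the constant-coefficient difference equation directly, obtaining the same characteristic polynomial $(1-p/2)^2\lambda^2-\lambda+(p/2)^2=0$ and the same particular solution $C\,r^k$. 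Your route is slightly more economical since it skips the EGF detour, but the algebraic content is identical and the final simplifications match.

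One small imprecision worth flagging: when you determine $g_k$, you say the solution is pinned down by ``boundedness of $g_k$ (guaranteed by the negative drift)''. But the general solution of $g_k=p\,g_{k-1}+(1-p)\,g_{k+1}$ is $\alpha+\beta\,r^k$, and the constant mode $\alpha$ is bounded, so boundedness alone does not kill it. What kills it is that $g_k\to 0$ as $k\to\infty$, which indeed follows from the negative drift (or from the standard gambler's-ruin hitting probability, which is how the paper phrases it, writing $r_1=\tfrac{p}{1-p}r_0$). Similarly, the boundedness argument that discards the $\lambda_+^k$ mode of $f_k$ needs $\lambda_+>1$, which fails exactly at $p=0$ where $\lambda_+=1$; this endpoint is degenerate in the paper's EGF argument too (there $f_p(x)=o(e^{\lambda_+x})$ becomes vacuous), and one should either restrict to $p\in(0,1/2]$ and handle $p=0$ by inspection (independent urns give $1/2$ immediately) or by continuity.
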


\textbf{Remark} The reinforcement weight sequence $w_i=\iy^i$ could be replaced 
by any $w_i=u_i\iy^{v_i}$ with $u_i=1$ and $(v_i)_i$ increasing. 
The law of the GIUM is trivially equal to that of GIUM with $w_i=\iy^i$.
Indeed, this law is entirely determined by the family $(\pi_w(i,j))_{i,j})$ and when $(v_i)_i$ is increasing, we have $\pi_w(i,j)=\indica{i>j}+\indica{i=j}/2$.

Note that if $p>1/2$, we already know (Theorem 3.2 from \cite{launay1}) that the two urns fixate on the same color almost surely. Therefore, the graph of
$$p\mapsto \mathbb{P}\left[F\right]$$
is as follows:

\begin{center}
	\pspicture(-.5,-.5)(4,4)
\scalebox{0.75}{
	\psaxes[labels=none,Dx=0.5,Dy=0.5]{->}(0,0)(5.5,5.5)

\pscurve(0,2.5)(.1,2.5261)(.2,2.5547)(.3,2.5858)(.4,2.6197)(.5,2.6565)(.6,2.6965)(.7,2.74)(.8,2.7873)(.9,2.8387)(1,2.8946)(1.1,2.9555)(1.2,3.0218)(1.3,3.0943)(1.4,3.1737)(1.5,3.2607)(1.6,3.3566)(1.7,3.4625)(1.8,3.5802)(1.9,3.7114)(2,3.8588)(2.1,4.0257)(2.2,4.2162)(2.3,4.4363)(2.4,4.6938)(2.5,5)
	\psline(2.5,5)(5,5)	
	\rput[Bl](-.5,-.5){0}
	\rput[Bl](-.8,.4){0,1}
	\rput[Bl](-.8,.9){0,2}
	\rput[Bl](-.8,1.4){0,3}
	\rput[Bl](-.8,1.9){0,4}
	\rput[Bl](-.8,2.4){0,5}
	\rput[Bl](-.8,2.9){0,6}
	\rput[Bl](-.8,3.4){0,7}
	\rput[Bl](-.8,3.9){0,8}
	\rput[Bl](-.8,4.4){0,9}
	\rput[Bl](-.8,4.9){1}
	\rput(1,-.4){0.2}
	\rput(2,-.4){0.4}
	\rput(3,-.4){0.6}
	\rput(4,-.4){0.8}
	\rput(5,-.4){1}
}
	\endpspicture

\noindent
{\footnotesize Figure 4: Fixation probability plot}
\setcounter{figure}{4}

\vspace{0.3cm}
\noindent
	\end{center}

Before proving Theorem \ref{theorem} in Section \ref{S:proofs}, we explain 
our main motivation for obtaining this result, which concerns the limiting regime for the Interacting Urn with exponential reinforcement, where $\rho \to \iy$.

For any given $\rho>1$ set $w(\rho)=(\rho^i)_{i\in\N\cup\{0\}}$, let and $w(\iy)=(\iy^i)_{i\in\N\cup\{0\}}$. Denote by $\P_{\rho},\E_\rho$ (resp.~$\P_{\iy},E_{\iy}$) the probability laws and expectations induced by the reinforcement weight $w(\rho)$ (resp.~$w(\iy)$) on $\mathfrak{U}^2$.
In fact, in Section \ref{S:approx} we will enrich the filtration to also include the i.i.d.~Bernoulli coin flips which tell us from which drawing pool (the urn itself or the two urns combined) are the draws made, and we will call the resulting laws again $\P_\rho,\P_\iy$. This should however not confuse the reader since the projection of the enriched laws to $\mathfrak{U}^2$ gives precisely the above defined laws. 
\begin{theorem}\label{T:approx}
We have
$$\lim_{\rho\to\iy}\P_{\rho}\left[F\right]=\P_{\iy}\left[F\right],$$
where the right-hand side equals the expression given in Theorem \ref{theorem}.
\end{theorem}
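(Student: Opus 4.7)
The plan is to couple the GIUM and the exponential-reinforcement IUM on a common probability space and to show that, as $\rho\to\iy$, the coupling succeeds at every step with probability tending to $1$. Since the event $F$ depends only on the sequences of colours drawn, any such coupling yields
$$|\P_\rho[F]-\P_\iy[F]|\le \P[\text{coupling fails at some step}],$$
so the theorem reduces to an estimate on the right-hand side.

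Concretely, I would work on the enriched probability space described just before the statement: for each urn $u\in\{1,2\}$ and each time $n$, introduce a $\mathrm{Bernoulli}(p)$ variable $B_n^u$ indicating that urn $u$ draws from the two urns combined, together with an independent $\mathrm{Uniform}(0,1)$ variable $U_n^u$, all mutually independent in $(n,u)$. The two processes are defined simultaneously by the rule that urn $u$ at step $n+1$ draws black iff $U_n^u$ is smaller than the probability of drawing black in the pool flagged by $B_n^u$. For a pool with counts $(a,b)$, that probability equals $1/(1+\rho^{b-a})$ under $\P_\rho$ and $\indica{a>b}+\tfrac12\indica{a=b}$ under $\P_\iy$, so the two decisions can only disagree when $a\neq b$ and $U_n^u$ lies in a strip of length $\rho^{-|a-b|}$ adjacent to $0$ or $1$. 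Let $\tau$ be the first disagreement time. On $\{\tau>n\}$ the two configurations coincide through step $n$, and a union bound over $(n,u)$ and pool choice produces
$$\P[\tau<\iy]\le 2(1-p)\sum_{n\ge 0}\bigl(\E_\iy[\rho^{-D_n^{(1)}}]+\E_\iy[\rho^{-D_n^{(2)}}]\bigr)+4p\sum_{n\ge 0}\E_\iy[\rho^{-D_n^\star}],$$
where $D_n^{(u)}$ is the $\P_\iy$-imbalance in urn $u$ after step $n$ and $D_n^\star$ is that in the combined pool.

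The key technical step is to show $\sum_n\E_\iy[2^{-D_n}]<\iy$ for each of the three imbalances that actually contributes. Once this is established, the pointwise convergence $\E_\iy[\rho^{-D_n}]\to 0$ together with the domination $\rho^{-D_n}\le 2^{-D_n}$ for $\rho\ge 2$ gives, by dominated convergence applied to the series, $\P[\tau<\iy]\to 0$. Summability itself rests on the macroscopic description of $\P_\iy$-trajectories underlying Theorem~\ref{theorem}: in the joint-fixation regime each of the three imbalances grows with drift $1$, whereas in the non-conformist regime, where one urn locks on a colour and the other alternates according to the sign of $B_n^u$, the conformist urn's imbalance has drift $1$, the non-conformist urn's has drift $1-2p$, and the combined one has drift $2p$---all strictly positive for $p\in(0,1/2)$. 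A standard Chernoff estimate $\P_\iy[D_n<cn]\le e^{-In}$ with $c,I>0$ then yields $\E_\iy[2^{-D_n}]\le 2^{-cn}+e^{-In}$, a geometric tail.

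The main obstacle is thus the extraction of these almost sure linear drifts of the three imbalances under $\P_\iy$, which requires the structural pathwise analysis of the generalized interacting dynamics essentially equivalent to that carried out for Theorem~\ref{theorem}. The boundary cases $p=0$ (the urns are independent and the combined pool is never sampled) and $p=1/2$ (almost sure fixation, so no non-conformist regime) should be dispatched separately to bypass the degeneracy in the corresponding drifts.
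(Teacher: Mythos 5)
Your coupling approach is a genuinely different route from the paper's. The paper partitions on the event $A$ (no asymptotically impossible draw) and its complement $\bar A$: on $F\cap A$ it invokes monotone convergence of each path probability, and it controls $\P_\rho(F\cap \bar A)$ by decomposing over whether the first AI-draw $\tau$ precedes $\sigma_2$, lies in $[\sigma_2,\sigma_3]$, or follows $\sigma_3$, and by establishing a uniform-in-large-$\rho$ bound on $\E_\rho(\tau\wedge\sigma_3\cdot\indica{\sigma_3<\iy})$ via a Wald/Galton--Watson argument for the $\rho$-perturbed random walk (Lemma~\ref{L:unibd on tau}). You instead realize both processes on one space and bound $\P(\tau<\iy)$ by a series of $\E_\iy$-expectations, pushing all estimates onto the unperturbed $\rho=\iy$ chain, which is cleaner because there is no perturbation of the transition probabilities to track.

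Two points need repair or substantial elaboration. First, the pointwise limit you assert is false as written: $\E_\iy[\rho^{-D_n}]\to \P_\iy(D_n=0)$, which is strictly positive for small $n$ (and for $D_n^\star$ is $1$ on the whole $C_1$ phase). The statement that saves you is that the disagreement strip has length $0$ when $a=b$ in the relevant pool, so the per-step failure probability is really bounded by a multiple of $\E_\iy[\rho^{-D_n}\indica{D_n\ge 1}]$, and that does tend to $0$ (it is at most $\rho^{-1}$). Second, the summability $\sum_n\E_\iy[2^{-D_n}]<\iy$ and the ``standard Chernoff estimate'' are the real content and are left essentially unargued. The uniform positive drift you cite only sets in after the (geometrically distributed but unbounded) time spent traversing $C_1$--$C_2$; during $C_1$ the combined imbalance is identically zero, and $D_n^{(2)}$ repeatedly returns to small levels both in $C_1$ and near $C_2(0)$. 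The cleanest way to close this gap is not Chernoff on $D_n$ itself but a bound $\sup_k \E_\iy[\#\{n: D_n=k\}]<\iy$, obtained by running through the three-phase classification of Figure~7 (geometric exit from $C_1$, uniformly bounded expected local times of the biased walk $Y$ in $C_2$, strict increase in $C_3$), after which $\sum_n\E_\iy[2^{-D_n}]=\sum_k 2^{-k}\E_\iy[N_k]<\iy$. This is of comparable difficulty to the structural work in the proof of Theorem~\ref{theorem}, as you anticipate, but it is about $\P_\iy$ only; you correctly avoid the paper's need for uniformity in $\rho$ of an expectation under $\P_\rho$. The boundary cases $p=0$ and $p=1/2$ are indeed degenerate for your drifts but trivial directly ($\P_\rho[F]$ is $1/2$ and $1$, respectively, for all $\rho$).
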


Note that $F$ is a countable set, so we have
$$\P_{\rho}\left[F\right]=\P_{\rho}\left[\bigcup_{(\mathfrak{u}, \mathfrak{v})\in F}\{(\mathfrak{u},\mathfrak{v})\}\right]=\sum_{(\mathfrak{u},\mathfrak{v})\in F}\P_{\rho}\left[(\mathfrak{u},\mathfrak{v})\right].$$
For any $(\mathfrak{u},\mathfrak{v})\in F$, it is relatively easy to show that
$$\lim_{\rho\to\iy}\P_{\rho}\left[(\mathfrak{u},\mathfrak{v})\right]=\P_{\iy}\left[(\mathfrak{u},\mathfrak{v})\right].$$
However, this convergence is not sufficient to prove the convergence of the series, and the most concise rigorous argument that we could find is perhaps surprisingly long. Since we use some elements of the proof of Theorem \ref{theorem} in the proof of Theorem \ref{T:approx}, this is postponed to Section \ref{S:approx}.

\section{Proofs of Theorems \ref{theorem} and \ref{T:approx}}
\label{S:proofs}
Our first observation is that for all $i,j\in\N$, we have $\pi(i,j)=\pi(i-1,j-1)$. This means that if an urn contains both black and white balls, one can withdraw one black ball and one white ball from this urn without changing the law of the next ball to be drawn. In other words, if an urn contains $B$ black balls and $W$ white balls with $B\geq W$, its behaviour is the same as the behaviour of an urn that contains $B-W$ black balls and no white balls.

This remark allows us to simplify the set of possible configurations for two interacting urns. Let us consider two urns, containing respectively $B_1$ and $B_2$ black balls and $W_1$ and $W_2$ white balls. We can classify the possible configurations into three cases: 

\textbf{Configuration 1.} $B_1+B_2=W_1+W_2$. It also means that $B_1-W_1=W_2-B_2$, and according to the previous remark, this configuration is equivalent to a configuration in which one urn contains $|B_1-W_1|=|B_2-W_2|$ black balls and the other as many white balls. We denote this configuration by $C_1(|B_1-W_1|)$.

\begin{center}
	\pspicture(0,-.6)(13.6,2.5)

\psline(-0.1,0)(1.6,0)
\psline(-0.1,0)(-0.1,2)
\psline(1.6,0)(1.6,2)
			\pscircle[fillstyle=solid,fillcolor=black, linewidth=1pt,linecolor=black](.25,.3){.23}
			\pscircle[fillstyle=solid,fillcolor=black, linewidth=1pt,linecolor=black](.75,.3){.23}
			\pscircle[fillstyle=solid,fillcolor=black, linewidth=1pt,linecolor=black](1.25,.3){.23}
			\pscircle[fillstyle=solid,fillcolor=black, linewidth=1pt,linecolor=black](.25,.8){.23}
			\pscircle[fillstyle=solid,fillcolor=white, linewidth=1pt,linecolor=black](.75,.8){.23}
			\pscircle[fillstyle=solid,fillcolor=white, linewidth=1pt,linecolor=black](1.25,.8){.23}
			\pscircle[fillstyle=solid,fillcolor=white, linewidth=1pt,linecolor=black](.25,1.3){.23}
			\pscircle[fillstyle=solid,fillcolor=white, linewidth=1pt,linecolor=black](.75,1.3){.23}
			\pscircle[fillstyle=solid,fillcolor=white, linewidth=1pt,linecolor=black](1.25,1.3){.23}
			\pscircle[fillstyle=solid,fillcolor=white, linewidth=1pt,linecolor=black](.25,1.8){.23}
			\pscircle[fillstyle=solid,fillcolor=white, linewidth=1pt,linecolor=black](.75,1.8){.23}
			\pscircle[fillstyle=solid,fillcolor=white, linewidth=1pt,linecolor=black](1.25,1.8){.23}

\psline(1.9,0)(3.6,0)
\psline(1.9,0)(1.9,2)
\psline(3.6,0)(3.6,2)
			\pscircle[fillstyle=solid,fillcolor=black, linewidth=1pt,linecolor=black](2.25,.3){.23}
			\pscircle[fillstyle=solid,fillcolor=black, linewidth=1pt,linecolor=black](2.75,.3){.23}
			\pscircle[fillstyle=solid,fillcolor=black, linewidth=1pt,linecolor=black](3.25,.3){.23}
			\pscircle[fillstyle=solid,fillcolor=black, linewidth=1pt,linecolor=black](2.25,.8){.23}
			\pscircle[fillstyle=solid,fillcolor=black, linewidth=1pt,linecolor=black](2.75,.8){.23}
			\pscircle[fillstyle=solid,fillcolor=black, linewidth=1pt,linecolor=black](3.25,.8){.23}
			\pscircle[fillstyle=solid,fillcolor=black, linewidth=1pt,linecolor=black](2.25,1.3){.23}
			\pscircle[fillstyle=solid,fillcolor=black, linewidth=1pt,linecolor=black](2.75,1.3){.23}
			\pscircle[fillstyle=solid,fillcolor=white, linewidth=1pt,linecolor=black](3.25,1.3){.23}
			\pscircle[fillstyle=solid,fillcolor=white, linewidth=1pt,linecolor=black](2.25,1.8){.23}
			\pscircle[fillstyle=solid,fillcolor=white, linewidth=1pt,linecolor=black](2.75,1.8){.23}
			\pscircle[fillstyle=solid,fillcolor=white, linewidth=1pt,linecolor=black](3.25,1.8){.23}

\psline(4.9,0)(6.6,0)
\psline(4.9,0)(4.9,2)
\psline(6.6,0)(6.6,2)
			\pscircle[fillstyle=solid,fillcolor=black, linewidth=1pt,linecolor=black](5.25,.3){.23}
			\pscircle[fillstyle=solid,fillcolor=black, linewidth=1pt,linecolor=black](5.75,.3){.23}
			\pscircle[fillstyle=solid,fillcolor=white, linewidth=1pt,linecolor=black](6.25,.3){.23}
			\pscircle[fillstyle=solid,fillcolor=white, linewidth=1pt,linecolor=black](5.25,.8){.23}
			\pscircle[fillstyle=solid,fillcolor=white, linewidth=1pt,linecolor=black](5.75,.8){.23}
			\pscircle[fillstyle=solid,fillcolor=white, linewidth=1pt,linecolor=black](6.25,.8){.23}
			\pscircle[fillstyle=solid,fillcolor=white, linewidth=1pt,linecolor=black](5.25,1.3){.23}
			\pscircle[fillstyle=solid,fillcolor=white, linewidth=1pt,linecolor=black](5.75,1.3){.23}

\psline(6.9,0)(8.6,0)
\psline(6.9,0)(6.9,2)
\psline(8.6,0)(8.6,2)
			\pscircle[fillstyle=solid,fillcolor=black, linewidth=1pt,linecolor=black](7.25,.3){.23}
			\pscircle[fillstyle=solid,fillcolor=black, linewidth=1pt,linecolor=black](7.75,.3){.23}
			\pscircle[fillstyle=solid,fillcolor=black, linewidth=1pt,linecolor=black](8.25,.3){.23}
			\pscircle[fillstyle=solid,fillcolor=black, linewidth=1pt,linecolor=black](7.25,.8){.23}
			\pscircle[fillstyle=solid,fillcolor=black, linewidth=1pt,linecolor=black](7.75,.8){.23}
			\pscircle[fillstyle=solid,fillcolor=black, linewidth=1pt,linecolor=black](8.25,.8){.23}
			\pscircle[fillstyle=solid,fillcolor=white, linewidth=1pt,linecolor=black](7.25,1.3){.23}
			\pscircle[fillstyle=solid,fillcolor=white, linewidth=1pt,linecolor=black](7.75,1.3){.23}
\psline(9.9,0)(11.6,0)
\psline(9.9,0)(9.9,2)
\psline(11.6,0)(11.6,2)
			\pscircle[fillstyle=solid,fillcolor=white, linewidth=1pt,linecolor=black](10.25,.3){.23}
			\pscircle[fillstyle=solid,fillcolor=white, linewidth=1pt,linecolor=black](10.75,.3){.23}
			\pscircle[fillstyle=solid,fillcolor=white, linewidth=1pt,linecolor=black](11.25,.3){.23}
			\pscircle[fillstyle=solid,fillcolor=white, linewidth=1pt,linecolor=black](10.25,.8){.23}

\psline(11.9,0)(13.6,0)
\psline(11.9,0)(11.9,2)
\psline(13.6,0)(13.6,2)
			\pscircle[fillstyle=solid,fillcolor=black, linewidth=1pt,linecolor=black](12.25,.3){.23}
			\pscircle[fillstyle=solid,fillcolor=black, linewidth=1pt,linecolor=black](12.75,.3){.23}
			\pscircle[fillstyle=solid,fillcolor=black, linewidth=1pt,linecolor=black](13.25,.3){.23}
			\pscircle[fillstyle=solid,fillcolor=black, linewidth=1pt,linecolor=black](12.25,.8){.23}

	\endpspicture

\noindent
{\footnotesize Figure 5:
Several examples of two urns in configuration $C_1(4)$.}
\setcounter{figure}{5}

\vspace{0.3cm}
\noindent
	\end{center}
The configuration in which both urns are empty is denoted by $C_1(0)$.

\textbf{Configuration 2.} $B_1+B_2 \neq W_1+W_2$ and $(B_2-W_2)(B_1-W_1)\leq 0$. This means that there is a global majority color in the two urns, but one of the two urns has a majority of the other color. Suppose without loss of generality that $B_1+B_2> W_1+W_2$, $B_1>W_1$ and $B_2\leq W_2$. Note that starting from this configuration, all the balls drawn from the urn 1 will be black almost surely. Therefore, the law of the next ball to be drawn starting from this configuration only depends on the difference $W_2-B_2$. We denote this configuration by $C_2(W_2-B_2)$.
\begin{center}
	\pspicture(0,-.6)(13.6,2.5)

\psline(-0.1,0)(1.6,0)
\psline(-0.1,0)(-0.1,2)
\psline(1.6,0)(1.6,2)
			\pscircle[fillstyle=solid,fillcolor=black, linewidth=1pt,linecolor=black](.25,.3){.23}
			\pscircle[fillstyle=solid,fillcolor=black, linewidth=1pt,linecolor=black](.75,.3){.23}
			\pscircle[fillstyle=solid,fillcolor=black, linewidth=1pt,linecolor=black](1.25,.3){.23}
			\pscircle[fillstyle=solid,fillcolor=black, linewidth=1pt,linecolor=black](.25,.8){.23}
			\pscircle[fillstyle=solid,fillcolor=black, linewidth=1pt,linecolor=black](.75,.8){.23}
			\pscircle[fillstyle=solid,fillcolor=black, linewidth=1pt,linecolor=black](1.25,.8){.23}
			\pscircle[fillstyle=solid,fillcolor=black, linewidth=1pt,linecolor=black](.25,1.3){.23}
			\pscircle[fillstyle=solid,fillcolor=black, linewidth=1pt,linecolor=black](.75,1.3){.23}
			\pscircle[fillstyle=solid,fillcolor=black, linewidth=1pt,linecolor=black](1.25,1.3){.23}
			\pscircle[fillstyle=solid,fillcolor=black, linewidth=1pt,linecolor=black](.25,1.8){.23}
			\pscircle[fillstyle=solid,fillcolor=white, linewidth=1pt,linecolor=black](.75,1.8){.23}
			\pscircle[fillstyle=solid,fillcolor=white, linewidth=1pt,linecolor=black](1.25,1.8){.23}

\psline(1.9,0)(3.6,0)
\psline(1.9,0)(1.9,2)
\psline(3.6,0)(3.6,2)
			\pscircle[fillstyle=solid,fillcolor=black, linewidth=1pt,linecolor=black](2.25,.3){.23}
			\pscircle[fillstyle=solid,fillcolor=black, linewidth=1pt,linecolor=black](2.75,.3){.23}
			\pscircle[fillstyle=solid,fillcolor=black, linewidth=1pt,linecolor=black](3.25,.3){.23}
			\pscircle[fillstyle=solid,fillcolor=black, linewidth=1pt,linecolor=black](2.25,.8){.23}
			\pscircle[fillstyle=solid,fillcolor=black, linewidth=1pt,linecolor=black](2.75,.8){.23}
			\pscircle[fillstyle=solid,fillcolor=white, linewidth=1pt,linecolor=black](3.25,.8){.23}
			\pscircle[fillstyle=solid,fillcolor=white, linewidth=1pt,linecolor=black](2.25,1.3){.23}
			\pscircle[fillstyle=solid,fillcolor=white, linewidth=1pt,linecolor=black](2.75,1.3){.23}
			\pscircle[fillstyle=solid,fillcolor=white, linewidth=1pt,linecolor=black](3.25,1.3){.23}
			\pscircle[fillstyle=solid,fillcolor=white, linewidth=1pt,linecolor=black](2.25,1.8){.23}
			\pscircle[fillstyle=solid,fillcolor=white, linewidth=1pt,linecolor=black](2.75,1.8){.23}
			\pscircle[fillstyle=solid,fillcolor=white, linewidth=1pt,linecolor=black](3.25,1.8){.23}

\psline(4.9,0)(6.6,0)
\psline(4.9,0)(4.9,2)
\psline(6.6,0)(6.6,2)
			\pscircle[fillstyle=solid,fillcolor=black, linewidth=1pt,linecolor=black](5.25,.3){.23}
			\pscircle[fillstyle=solid,fillcolor=black, linewidth=1pt,linecolor=black](5.75,.3){.23}
			\pscircle[fillstyle=solid,fillcolor=black, linewidth=1pt,linecolor=black](6.25,.3){.23}
			\pscircle[fillstyle=solid,fillcolor=black, linewidth=1pt,linecolor=black](5.25,.8){.23}
			\pscircle[fillstyle=solid,fillcolor=black, linewidth=1pt,linecolor=black](5.75,.8){.23}
			\pscircle[fillstyle=solid,fillcolor=black, linewidth=1pt,linecolor=black](6.25,.8){.23}
			\pscircle[fillstyle=solid,fillcolor=black, linewidth=1pt,linecolor=black](5.25,1.3){.23}
			\pscircle[fillstyle=solid,fillcolor=black, linewidth=1pt,linecolor=black](5.75,1.3){.23}

\psline(6.9,0)(8.6,0)
\psline(6.9,0)(6.9,2)
\psline(8.6,0)(8.6,2)
			\pscircle[fillstyle=solid,fillcolor=white, linewidth=1pt,linecolor=black](7.25,.3){.23}
			\pscircle[fillstyle=solid,fillcolor=white, linewidth=1pt,linecolor=black](7.75,.3){.23}
			
\psline(9.9,0)(11.6,0)
\psline(9.9,0)(9.9,2)
\psline(11.6,0)(11.6,2)
			\pscircle[fillstyle=solid,fillcolor=black, linewidth=1pt,linecolor=black](10.25,.3){.23}
			\pscircle[fillstyle=solid,fillcolor=black, linewidth=1pt,linecolor=black](10.75,.3){.23}
			\pscircle[fillstyle=solid,fillcolor=black, linewidth=1pt,linecolor=black](11.25,.3){.23}
			\pscircle[fillstyle=solid,fillcolor=black, linewidth=1pt,linecolor=black](10.25,.8){.23}
			\pscircle[fillstyle=solid,fillcolor=black, linewidth=1pt,linecolor=black](10.75,.8){.23}
			\pscircle[fillstyle=solid,fillcolor=black, linewidth=1pt,linecolor=black](11.25,.8){.23}
			\pscircle[fillstyle=solid,fillcolor=black, linewidth=1pt,linecolor=black](10.25,1.3){.23}
			\pscircle[fillstyle=solid,fillcolor=black, linewidth=1pt,linecolor=black](10.75,1.3){.23}
			\pscircle[fillstyle=solid,fillcolor=black, linewidth=1pt,linecolor=black](11.25,1.3){.23}
			\pscircle[fillstyle=solid,fillcolor=black, linewidth=1pt,linecolor=black](10.25,1.8){.23}
			\pscircle[fillstyle=solid,fillcolor=black, linewidth=1pt,linecolor=black](10.75,1.8){.23}
			\pscircle[fillstyle=solid,fillcolor=black, linewidth=1pt,linecolor=black](11.25,1.8){.23}

\psline(11.9,0)(13.6,0)
\psline(11.9,0)(11.9,2)
\psline(13.6,0)(13.6,2)
			\pscircle[fillstyle=solid,fillcolor=white, linewidth=1pt,linecolor=black](12.25,.3){.23}
			\pscircle[fillstyle=solid,fillcolor=white, linewidth=1pt,linecolor=black](12.75,.3){.23}
			
	\endpspicture

\noindent
{\footnotesize Figure 6:
Several examples of two urns in configuration $C_2(2)$.}
\setcounter{figure}{6}

\vspace{0.3cm}
\noindent
	\end{center}
	
\textbf{Configuration 3.} $B_1+B_2\neq W_1+W_2$, and $(B_2-W_2)(B_1-W_1)>0$. Here the global majority color is the same as the majority color in each of the two urns. Starting from this configuration, all the balls drawn will have the majority color, almost surely. We denote this configuration by $C_3$.

Since the two urns are initially empty, they stay for some time in Configuration 1 and then jump to Configuration 2 (or possibly directly to Configuration 3). Once in Configuration 2, they can either stay in Configuration 2 forever, or jump to Configuration 3, and stay there forever after. 
A typical evolution of this process is schematically depicted by the following figure:

\begin{center}
	\pspicture(0,-2)(11,2)

\rput(1.5,-.5){$\underbrace{~~~~~~~~~~~~~~~~~~~~}$}
\rput(1.5,-1){Configuration 1}
\rput(5.1,-.5){$\underbrace{~~~~~~~~~~~~~~~~~~~~~~~~~~~~}$}
\rput(5.1,-1){Configuration 2}
\rput(8.9,-.5){$\underbrace{~~~~~~~~~~~~~~~~~~~~~~}$}
\rput(8.9,-1){Configuration 3}
\psline[border=1.5pt](0,0)(10,0)
\psline[border=1.5pt](0,0)(0,.6)
\psline[border=1.5pt](0,.6)(10,.6)

\psline[border=1.5pt](0,1)(10,1)
\psline[border=1.5pt](0,1)(0,1.6)
\psline[border=1.5pt](0,1.6)(10,1.6)

			\pscircle[fillstyle=solid,fillcolor=black, linewidth=1pt,linecolor=black](.3,.3){.2}
			\pscircle[fillstyle=solid,fillcolor=white, linewidth=1pt,linecolor=black](.3,1.3){.2}

			\pscircle[fillstyle=solid,fillcolor=black, linewidth=1pt,linecolor=black](.9,.3){.2}
			\pscircle[fillstyle=solid,fillcolor=white, linewidth=1pt,linecolor=black](.9,1.3){.2}

			\pscircle[fillstyle=solid,fillcolor=white, linewidth=1pt,linecolor=black](1.5,.3){.2}
			\pscircle[fillstyle=solid,fillcolor=black, linewidth=1pt,linecolor=black](1.5,1.3){.2}

			\pscircle[fillstyle=solid,fillcolor=black, linewidth=1pt,linecolor=black](2.1,.3){.2}
			\pscircle[fillstyle=solid,fillcolor=white, linewidth=1pt,linecolor=black](2.1,1.3){.2}

			\pscircle[fillstyle=solid,fillcolor=white, linewidth=1pt,linecolor=black](2.7,.3){.2}
			\pscircle[fillstyle=solid,fillcolor=black, linewidth=1pt,linecolor=black](2.7,1.3){.2}

			\pscircle[fillstyle=solid,fillcolor=white, linewidth=1pt,linecolor=black](3.3,.3){.2}
			\pscircle[fillstyle=solid,fillcolor=white, linewidth=1pt,linecolor=black](3.3,1.3){.2}

			\pscircle[fillstyle=solid,fillcolor=black, linewidth=1pt,linecolor=black](3.9,.3){.2}
			\pscircle[fillstyle=solid,fillcolor=white, linewidth=1pt,linecolor=black](3.9,1.3){.2}

			\pscircle[fillstyle=solid,fillcolor=white, linewidth=1pt,linecolor=black](4.5,.3){.2}
			\pscircle[fillstyle=solid,fillcolor=white, linewidth=1pt,linecolor=black](4.5,1.3){.2}

			\pscircle[fillstyle=solid,fillcolor=black, linewidth=1pt,linecolor=black](5.1,.3){.2}
			\pscircle[fillstyle=solid,fillcolor=white, linewidth=1pt,linecolor=black](5.1,1.3){.2}

			\pscircle[fillstyle=solid,fillcolor=black, linewidth=1pt,linecolor=black](5.7,.3){.2}
			\pscircle[fillstyle=solid,fillcolor=white, linewidth=1pt,linecolor=black](5.7,1.3){.2}

			\pscircle[fillstyle=solid,fillcolor=white, linewidth=1pt,linecolor=black](6.3,.3){.2}
			\pscircle[fillstyle=solid,fillcolor=white, linewidth=1pt,linecolor=black](6.3,1.3){.2}

			\pscircle[fillstyle=solid,fillcolor=white, linewidth=1pt,linecolor=black](6.9,.3){.2}
			\pscircle[fillstyle=solid,fillcolor=white, linewidth=1pt,linecolor=black](6.9,1.3){.2}

			\pscircle[fillstyle=solid,fillcolor=white, linewidth=1pt,linecolor=black](7.5,.3){.2}
			\pscircle[fillstyle=solid,fillcolor=white, linewidth=1pt,linecolor=black](7.5,1.3){.2}

			\pscircle[fillstyle=solid,fillcolor=white, linewidth=1pt,linecolor=black](8.1,.3){.2}
			\pscircle[fillstyle=solid,fillcolor=white, linewidth=1pt,linecolor=black](8.1,1.3){.2}

			\pscircle[fillstyle=solid,fillcolor=white, linewidth=1pt,linecolor=black](8.7,.3){.2}
			\pscircle[fillstyle=solid,fillcolor=white, linewidth=1pt,linecolor=black](8.7,1.3){.2}
			
	\endpspicture

\noindent
{\footnotesize Figure 7:
The three phases of the evolution.}
\setcounter{figure}{7}

\vspace{0.3cm}
\noindent
	\end{center}

\subsection{Proof of Theorem \ref{theorem}}
For $\ell\geq 0$ we denote by
$$q_\ell\equiv q_\ell(p):=\mathbb{P}\left[\text{The two urns fixate on the same color starting from configuration }C_1(\ell)\right],$$
and
$$r_\ell\equiv r_\ell(p):=\mathbb{P}\left[\text{The two urns fixate on the same color starting from configuration }C_2(\ell)\right].$$

As already noted, starting from configuration $C_3$, the probability that the two urns fixate on the same color equals 1. 
We proceed by observing that, due to the Markov-like property, some simple relations connect these different probabilities.

Starting from $C_1(0)$, each urn independently draws a white or black ball with probability 1/2. Therefore with probability 1/2, the two urns draw the same color and immediately (at time $1$) enter the configuration $C_3$, and with probability 1/2 the two urns choose different colors and jump to configuration $C_1(1)$ at time $1$. This implies
\begin{equation}
\label{eqn_p0}q_0=\frac{1}{2}+\frac{1}{2}q_1.
\end{equation}
For $\ell\geq 1$, starting from $C_1(\ell)$, the urns can jump to $C_1(\ell-1)$, $C_1(\ell+1)$ or $C_2(\ell-1)$, and this decision is made independently form the past. More precisely:
\begin{itemize}
\item They jump to configuration $C_1(\ell-1)$ if and only if each urn draws a ball of the color which is not its majority color. In order that this happens, for each of the urns the drawing must be done in the two urns combined (with probability $p$) and the minority color of the urns combined must be chosen (with probability $1/2$). So the probability of this move equals $(p/2)^2$.
\item They jump to configuration $C_1(\ell+1)$ if and only if each urn draws a ball of its majority color. This happens with probability $((1-p)+p/2)^2=(1-p/2)^2$.
\item They jump to configuration $C_2(\ell-1)$ if the two urns draw a ball of the same color. This happens with the remaining probability $p(1-p/2)$.
\end{itemize}
This implies the following sequence of relations:
\begin{equation}\label{eqn_p1}
q_\ell=\left(\frac{p}{2}\right)^2 q_{\ell-1}+\left(1-\frac{p}{2}\right)^2 q_{\ell+1}+p(1-\frac{p}{2})r_{\ell-1}, \ \ell\geq 1.
\end{equation}

Starting from configuration $C_2(0)$, the urns either jump to configuration $C_3$ with probability $(1+p)/2$, or to configuration $C_2(1)$ with probability $(1-p)/2$. 
We conclude that
\begin{equation}\label{eqn_r1}
r_0= \frac{1+p}{2}+\frac{1-p}{2}r_1.
\end{equation}

Similarly, for $\ell\geq 1$, starting from configuration $C_2(\ell)$, the urns either jump to configuration $C_2(\ell-1)$ with probability $p$, or to configuration $C_2(\ell-1)$ with probability $1-p$, again independently of the past. This translates into
\begin{equation}\label{eqn_r2}
r_\ell = p r_{\ell-1} + (1-p)r_{\ell+1}.
\end{equation}
In fact, the above reasoning shows that if $Y_n$ is an integer-valued random variable such that at time $n$, the urns are in configuration $C_2(Y_n)$, then $(Y_{k \wedge \tau_0}, \,k\geq n)$ is the simple random walk with drift $1-2p$ killed at 
$\tau_0:=\inf\{j\geq n:\, Y_j=0\}$.
It is well known (recall that $p<1/2$) that 
$\P(\tau_0<\iy\,|\, \FF_n)=p/(1-p)$ on $\{Y_n=1\}$, yielding
$r_1=\frac{p}{1-p}r_0$, and
together with (\ref{eqn_r1}) this  gives
$r_0=\frac{1+p}{2-p}.$
Now one can deduce by induction from (\ref{eqn_r2}) that 
$$r_\ell=\frac{1+p}{2-p}\left(\frac{p}{1-p}\right)^\ell,\ \ell\geq 0.$$
We then introduce these values into (\ref{eqn_p1}) to obtain
\begin{equation}\label{eqn_p2}
q_\ell=\left(\frac{p}{2}\right)^2 q_{\ell-1}+\left(1-\frac{p}{2}\right)^2 q_{\ell+1}+\frac{p(1+p)}{2}\left(\frac{p}{1-p}\right)^{\ell-1}, \ \ell\geq 1.
\end{equation}
Define a function
$$f_p(x):=\sum_{\ell=0}^{\iy}q_\ell(p)\frac{x^\ell}{\ell !}.$$
Differentiating both sides and applying (\ref{eqn_p2}) we get
\begin{eqnarray*}
f_p^{\prime}(x)&=&\sum_{\ell=0}^{\iy}q_\ell(p)\frac{x^{\ell-1}}{(\ell-1) !}\\
&=&\sum_{\ell=1}^{\iy}\left(\left(\frac{p}{2}\right)^2 q_{\ell-1}+\left(1-\frac{p}{2}\right)^2 q_{\ell+1}+\frac{p(1+p)}{2}\left(\frac{p}{1-p}\right)^{\ell-1}\right)\frac{x^{\ell-1}}{(\ell-1) !}\\
&=&\left(\frac{p}{2}\right)^2\sum_{\ell=0}^{\iy}q_\ell(p)\frac{x^{\ell}}{\ell !}+\left(1-\frac{p}{2}\right)^2\sum_{\ell=0}^{\iy}q_{\ell+2}(p)\frac{x^{\ell}}{\ell !}+\frac{p(1+p)}{2}\sum_{\ell=0}^{\iy}\frac{\left(\frac{px}{1-p}\right)^{\ell}}{\ell !}\\
&=&\left(\frac{p}{2}\right)^2f_p(x)+\left(1-\frac{p}{2}\right)^2f_p^{\prime\prime}(x)+\frac{p(1+p)}{2}\exp\left(\frac{px}{1-p}\right).\\
\end{eqnarray*}
So $f_p$ is a solution of the ODE
\begin{equation}\label{ODE}
\left(1-\frac{p}{2}\right)^2y^{\prime\prime}-y^{\prime}+\left(\frac{p}{2}\right)^2y+\frac{p(1+p)}{2}\exp\left(\frac{px}{1-p}\right)=0.
\end{equation}
The characteristic polynomial of the homogeneous ODE is
$$\left(1-\frac{p}{2}\right)^2\lambda^2-\lambda+\left(\frac{p}{2}\right)^2=0,$$
and its two solutions are
\begin{equation}\label{Elambda}
\lambda_{\pm}(p)=\frac{1\pm\sqrt{1-4\left(\frac{p}{2}\right)^2\left(1-\frac{p}{2}\right)^2}}{2\left(1-p/2\right)^2}=\frac{1\pm\sqrt{1-p^2\left(1-\frac{p}{2}\right)^2}}{2\left(1-p/2\right)^2}.
\end{equation}
So the general solution of the homogeneous equation is of the form
$$f_p^1(x)=A(p)\exp{(\lambda_{-}(p)x)}+B(p)\exp{(\lambda_{+}(p)x)}.$$
Let us now search for a particular solution to (\ref{ODE}) of the form $f_p^2(x)=C(p)\exp(xp/(1-p))$. Plugging it into (\ref{ODE}), we obtain
\begin{equation}\label{EC}
C(p)=-\frac{2 (1-p)^2 (1+p)}{2 p^3-6p^2+9p-4}\ .
\end{equation}
Hence the general solution to (\ref{ODE}) is of the form
$$f_p(x)=f_p^1(x)+f_p^2(x)=A(p)\exp{(\lambda_{-}(p)x)}+B(p)\exp{(\lambda_{+}(p)x)}+C(p)\exp\left(\frac{p}{1-p}x\right).$$
In order to find $A(p)$ and $B(p)$, note that since $q_\ell\leq 1$ for all $\ell\geq 0$, we have
$$f_p(x)=\sum_{\ell=0}^{\iy}q_\ell(p)\frac{x^\ell}{\ell !}\leq \sum_{\ell=0}^{\iy}\frac{x^\ell}{\ell !}=\exp(x).$$
So $f_p(x)=o(\exp{(\lambda_{+}(p)x)})$ since $\lambda_+(p)>1$ as can be easily checked.
Therefore we deduce that $B(p)=0$.
We can now find $A(p)$ from the following linear system:
$$\left\{\begin{array}{l}
q_0=f_p(0)=A(p)+C(p)\\
q_1=f_p^{\prime}(0)=A(p)\lambda_{-}(p)+C(p)\frac{p}{1-p}
\end{array}\right. .$$
Recalling (\ref{eqn_p0}) we then obtain
$$A(p)+C(p)=\frac{1}{2}+\frac{1}{2}\left(A(p)\lambda_{-}(p)+C(p)\frac{p}{1-p}\right),$$
and deduce that
$$A(p)=\frac{1-p+C(p)(3p-2)}{(1-p)(2-\lambda_{-}(p))}.$$
Therefore
$$q_0(p)=A(p)+C(p)=\frac{1-p+C(p)(3p-2)}{(1-p)(2-\lambda_{-}(p))}+C(p),$$
where $C(p)$ and $\lambda_{-}(p)$ are as in (\ref{EC}) and (\ref{Elambda}), respectively, concluding the proof of Theorem \ref{theorem}. 

One can check in particular that $C(0)=1/2$, $C(1/2)=1$, $A(0)=A(1/2)=0$, and so
$$q_0(0)=1/2\text{~~~and~~~}q_0(1/2)=1.$$
Indeed, when $p=0$ the two urns are independent and the probability that they fixate on the same color is $1/2$, while $p=1/2$ is the critical phase of the IUM, for which we already knew that the two urns fixate on the same color, almost surely (see Theorem 3.5 in \cite{launay1}).

\subsection{Proof of Theorem \ref{T:approx}}
\label{S:approx}
In most of this argument we do not fix some $\rho\in (1,\iy]$ but rather consider the whole family of laws $\P_\rho$, $\rho\in (1,\iy]$.
Denote by $S_k$ the state of the two (generalized) interacting urns at time $k$. 
The reader should think of $S_k$ as a ``struct'' that contains the count of black and white balls for each of the urns at time $k$, as well as the information on the drawing pools used at time $k-1$ (for the draws made to update the state at time $k$ from the state at time $k-1$). 
Denote by $\FF=(\FF_k)_{k \geq 0}$ the filtration generated by the process $S$, that is, 
$\FF_k=\sigma\{S_n:n =0,\ldots,k\}$.
Define furthermore $\sigma_1:=0$,
$$
\sigma_2:=\inf\{k \geq 0: S_k \in \cup_{i=1}^\iy C_2(i)\},\
\sigma_3:=\inf\{k \geq 0: S_k \in C_3\}.
$$
Clearly, all the $\sigma_i$ are stopping times with respect to $\FF$.
As depicted in Figure 7, we have $\P_\iy(\sigma_1 \leq \sigma_2 \wedge \sigma_3 \leq  \sigma_3)=1$.
As argued in the course of the proof of Theorem \ref{theorem}, we also have that 
if $S_k \in C_2(\ell)$ for some $k$ and $\ell\geq 1$, then
$\P_\iy(S_{k+1} \in \{C_2(\ell-1),C_2(\ell+1)\})=1$.  
Similarly, if $S_k \in C_2(0)$ for some $k$ then 
$ \P_\iy(S_{k+1} \in \{C_2(1),C_3\})=1$, and if $S_k \in C_3$ then
$\P_\iy(S_{k+1} \in C_3)=1$. Therefore
\[
F= \{\sigma_3< \iy\},\ \P_\iy-\mbox{almost surely}.
\]
Our main problem in proving the current result is related to the fact that most of the above statements fail
to hold under $\P_\rho$ if $\rho<\iy$.

Let $\bar{A}$ be following event: there exist some $m \in \N$ and an urn $u$ such that the color of the ball drawn for $u$ at time $m$ is the minority color in the drawing pool  prescribed by the corresponding Bernoulli($p$) flip (if $1$ this pool is the two urns combined, otherwise this pool is the urn $u$ itself). Such a draw is asymptotically impossible, or equivalently $\P_\iy(\bar{A})=0$, and henceforth we will refer to it as ``AI-draw''. Let $A$ be the complement of $\bar{A}$.
Our strategy is to show the following two points.
\begin{lemme}
\label{L:two pts}
(i) $\lim_{\rho\to\iy}\P_\rho[F \cap A]=\P_\iy[F \cap A]=\P_\iy[F]$,\\
(ii) $\P_\rho[F\cap \bar{A}]=O(1/\rho)$.
\end{lemme}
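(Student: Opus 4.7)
For part (i), I express
\[
\P_\rho[F\cap A] \;=\; \sum_{(\mathfrak{u},\mathfrak{v})\in F\cap A} \P_\rho[(\mathfrak{u},\mathfrak{v})]
\]
as a countable sum and apply dominated convergence for sums. The key pointwise bound is that on $A$ each transition probability along the path equals either $1/2$ (when the prescribed pool is tied, in which case $\pi_\rho=\pi_\iy=1/2$) or $\pi_\rho(a,b)=1/(1+\rho^{-(a-b)})\le 1=\pi_\iy(a,b)$ (when the pool has a strict majority and the drawn color is the majority, as forced on $A$); the Bernoulli$(p)$ pool-selection weights are identical under both laws. Hence $\P_\rho[(\mathfrak{u},\mathfrak{v})]\le\P_\iy[(\mathfrak{u},\mathfrak{v})]$ for every $(\mathfrak{u},\mathfrak{v})\in A$. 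Moreover, $\P_\rho[(\mathfrak{u},\mathfrak{v})]\to\P_\iy[(\mathfrak{u},\mathfrak{v})]$ pointwise: the pre-fixation portion is a finite product each of whose factors converges to its $\iy$-limit, and the infinite post-fixation tail $\prod_k\pi_\rho(a_k,b_k)$ with growing gaps $a_k-b_k\ge k+\mathrm{const}$ tends to $1$ by summability of $(\rho^{-k})_k$. Since $\sum_{(\mathfrak{u},\mathfrak{v})\in F\cap A}\P_\iy[(\mathfrak{u},\mathfrak{v})]=\P_\iy[F\cap A]=\P_\iy[F]<\iy$, dominated convergence yields (i).

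For part (ii), bound $\P_\rho[F\cap\bar A]\le\P_\rho[\bar A]\le\E_\rho[N]$, where $N$ counts the total number of AI-draws over all time, and show $\E_\rho[N]=O(1/\rho)$. By Tonelli,
\[
\E_\rho[N]\;=\;\sum_{m\ge 1}\sum_u \E_\rho\!\left[\P_\rho[\text{AI-draw by urn }u\text{ at time }m\mid\FF_{m-1}]\right].
\]
Conditional on $\FF_{m-1}$ and the pool flip, each AI-draw probability is of the form $\rho^{-g}/(1+\rho^{-g})$ with $g\ge 1$ the gap in the prescribed pool. Splitting by the configuration class of $S_{m-1}$: in $C_1(\ell)$ (combined pool tied) only the self-pool term contributes, bounded by $2(1-p)\rho^{-\ell}$ per visit; in $C_2(\ell)$ and in $C_3$ both self and combined terms contribute, each of size $O(\rho^{-g})$ for the relevant gap $g\ge 1$.

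The quantitative key is that for $p<1/2$ the expected number of time steps the chain spends in any specific $C_1(\ell)$, $C_2(\ell)$, or in $C_3$ at any given combined gap $g$, is bounded by a constant uniformly in $\rho$ and in $\ell$ (resp.\ $g$) -- using the geometric exit probability from each $C_1(\ell)$, the positive drift $1-2p$ of the random walk inside $C_2$, and the deterministic linear growth of gaps in $C_3$. Summing then collapses the triple sum to
\[
\E_\rho[N]\;\le\;O(1)\sum_{\ell\ge 1}\rho^{-\ell}+O(1)\sum_{g\ge 1}\rho^{-g}\;=\;O(1/\rho).
\]
The main obstacle is accounting for the feedback by which an AI-draw can decrease a gap and push the chain back into a lower-index configuration, enabling further AI-draws; this is handled by a bootstrap that re-applies the preceding bound at each restart, so that the contributions from successive ``restarts'' form a series of the same $O(1/\rho)$ order. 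The cases $p\in\{0,1/2\}$ are handled separately: when $p=0$ the urns are independent single urns and only the self-pool contribution survives, already bounded by $\sum_\ell\rho^{-\ell}=O(1/\rho)$; when $p=1/2$ one has $\P_\iy[F]=1$ and the conclusion follows from \cite{launay1}, Theorem 3.5.
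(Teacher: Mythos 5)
Your part (i) is essentially the paper's argument, phrased via dominated rather than monotone convergence: you note that on $A$ each path factor is non-decreasing in $\rho$ and bounded by its $\rho=\iy$ value, so $\P_\rho[\pi]\le\P_\iy[\pi]$ and $\P_\rho[\pi]\to\P_\iy[\pi]$, and then you interchange. The paper uses the same monotonicity but literally invokes monotone convergence. Both are fine; this half is correct.

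Part (ii) is where the genuine gap lies. Your strategy is to bound $\P_\rho[\bar A]\le\E_\rho[N]$ (expected total number of AI-draws) and control $\E_\rho[N]$ by summing per-visit AI-probabilities. This is a sensible plan, but you assert, without proof, the key quantitative input: that the expected number of visits to each $C_1(\ell)$, $C_2(\ell)$, and to each gap level in $C_3$, is bounded \emph{uniformly in $\rho$ and in $\ell$}, and that a ``bootstrap'' handles the renewal structure created by AI-draws. These are precisely the nontrivial points; the paper isolates them into a separate statement (Lemma \ref{L:unibd on tau}: there is $\rho_0$ such that $\E_\rho(\tau\wedge\sigma_3\cdot\indica{\sigma_3<\iy})<C$ for $\rho>\rho_0$, with $C=C(p)$), and proving that lemma occupies most of the section and requires a careful excursion/Galton--Watson argument to show the relevant geometric moment is finite uniformly in large $\rho$. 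Your sketch gestures at ``geometric exit'', ``positive drift'' and ``deterministic growth'', which is the right intuition but does not establish the uniform-in-$\rho$ visit bounds: under $\P_\rho$ the step probabilities carry $\FF_k$-measurable corrections $O_{k,\ell}(\rho)$, and one must show these cannot accumulate over the (random, possibly long) pre-fixation period. The paper handles this by first splitting $\P_\rho(F\cap\bar A)$ by the phase containing the \emph{first} AI-draw (so one never has to reason about the process after an AI-draw at all, and your ``bootstrap'' issue disappears), and then bounding the weighted occupation times $\E_\rho(\#\{\text{steps in }\cup_i C_j(i)\text{ before }\tau\};\sigma_3<\iy)$, which is what Lemma \ref{L:unibd on tau} controls. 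In short: your decomposition is different (all AI-draws over all time, with a renewal bootstrap, versus the paper's first-AI-draw decomposition), and the central estimate you need is asserted rather than proved, so part (ii) is a proof outline rather than a proof.

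A smaller point: the paper's Lemma \ref{L:unibd on tau} is stated for $p\in(0,1/2)$, so your observation that $p\in\{0,1/2\}$ should be treated separately is reasonable, though the $p=0$ case should be argued from $C_1$-monotonicity (the urns never couple, each level is visited exactly once absent an AI-draw) rather than left as an aside.
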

Let $\tau:= \inf\{k \geq 0: S_k$ is a result of an AI-draw$\}$, with the usual convention that $\tau=\iy$ if such a draw is never made.
Then is it clear that $\tau$ is a stopping time with respect to $\FF$ (for each $\rho\in (1,\iy)$).

Our interest in $\tau$ comes from the observation that 
the state of the two urns changes according to the same general pattern (depicted in Figure 7) under $\P_\rho$ and under $\P_\infty$ until the time of the first AI-draw, where the transition probabilities depend on $\rho$.
More precisely, we have $\sigma_1=\sigma_1\wedge \tau \leq \tau \wedge \sigma_2 
\wedge \sigma_3 \leq \tau \wedge \sigma_3$, and also if $S_k \in C_2(\ell)$ for some $k<\tau$ and $\ell\geq 1$, then either 
$\{\tau=k+1\}$ or $\{\tau > k+1, S_{k+1} \in \{C_2(\ell-1),C_2(\ell+1)\}\}$ happen
$\P_\rho$-almost surely.  
Furthermore, if $S_k \in C_2(0)$ for some $k<\tau$, then 
$\P_\rho(\tau=k+1) + \P_\rho(\tau > k+1, S_{k+1} \in \{C_2(1),C_3\})=1$
and if $S_k \in C_3$ for some $k<\tau$, then 
$\P_\rho(\tau=k+1) + \P_\rho(\tau > k+1, S_{k+1} \in C_3)=1$.
Therefore, $F\cap A=\{\sigma_3<\iy, \tau=\iy\}$ and more importantly $F\cap \bar{A}=\{\sigma_3<\iy, \tau<\iy\}$, $\P_\rho$-almost surely, for any $\rho\in(1,\iy]$.

In order to show part (i) of Lemma \ref{L:two pts}, we note that the probability of any path (in the countable set of infinite paths) contributing to $F\cap A$
is a product of finitely many terms of the form $c\times\frac{\rho^a}{\rho^a+\rho^b}$ with 
$c\in \{1,p,1-p\}$ and
$a\geq b$ (otherwise, there would exist an AI-draw along the path)
and infinitely many terms of the form $p\times\frac{\rho^a}{\rho^a+\rho^b} +
(1-p)\times\frac{\rho^c}{\rho^c+\rho^d}$, where again $a/b \wedge c/d\geq 1$.
Therefore for each infinite path $\pi\in F\cap A$, 
$$\rho \mapsto \P_{\rho}(\pi)$$
increases to $\P_{\iy}(\pi)$ and so by the monotone convergence theorem, we have $\lim_{\rho\to\iy}\P_\rho(F\cap A)=\P_\iy(F\cap A)=\P_\iy(F)$, where the last equality is due to 
$\P_\iy(\bar{A})=0$. 

Arguing part (ii) of Lemma \ref{L:two pts} turns out to be more tricky.
First note that $\P_\rho(F\cap \bar{A})=$
\[
\P_\rho(\sigma_3<\iy, \tau<\iy)\leq \P_\rho(\tau \leq \sigma_2 \wedge \sigma_3,\sigma_3<\iy) + \P_\rho(\sigma_2< \tau \leq \sigma_3<\iy) + \P_\rho(\sigma_3< \tau <\iy).
\]
The final term above is the simplest one to study so we consider it first.
Indeed, at time $\sigma_3$ 
the count of the global majority color for all the pools (each of the urns, and therefore the urns combined) is at least by one higher than that for the minority color.
Moreover,
as long as the two urns remain in state $C_3$ (which is certainly true until time $\tau$), the count of the majority color for all the pools (each of the urns, and therefore the urns combined) increases by at least one in each step.
Therefore, by nested conditioning with respect to $\FF_{\sigma_3+i}$, we have
\begin{eqnarray*}
\P_\rho(\sigma_3< \tau <\iy)&=& 
\sum_{i=0}^\iy \P_\rho(\sigma_3<\iy,\sigma_3+i<\tau,\sigma_3+i+1= \tau)\\
&\leq& \sum_{i=0}^\iy \P_\rho(\sigma_3<\iy,\sigma_3+i<\tau) \cdot \frac{1}{1+\rho^{i+1}}\\
&\leq& \sum_{i=0}^\iy \frac{1}{1+\rho^{i+1}}= O\left(\frac{1}{\rho} \right).
\end{eqnarray*}

Note that $\sigma_2 \wedge \sigma_3$ is a finite (even stochastically bounded by a 
geometric, see the proof of Lemma \ref{L:unibd on tau} below) random variable under $\P_\rho$, for each $\rho$.
We split the event $\{\tau\leq \sigma_2 \wedge \sigma_3\}$ according to the state of the system at time $\tau-1$ and the value of $\tau$. Note that AI-draw is impossible from state $C_1(0)$, for any $\rho\in \R_+$. Also note that $\sigma_2<\sigma_3$ (in words, the two urns enter $\cup_{\ell \geq 0}C_2(\ell)$ at the exit time from $\cup_i C_1(i)$) unless 
$S_{\sigma_2 \wedge \sigma_3 -1}\in C_1(0)$ in which case the two urns jump directly to $C_3$. 
Thus
\begin{eqnarray*}
\P_\rho(\tau\leq \sigma_2 \wedge \sigma_3,\sigma_3<\iy)&=& 
\sum_{i=1}^\iy \P_\rho(S_{\tau-1}\in C_1(i), \tau\leq \sigma_2, \sigma_3<\iy)\\
&\leq& \sum_{i=1}^\iy \sum_{k=0}^\iy \P_\rho(\tau>k-1, S_{k-1}\in C_1(i),
\tau=k,\sigma_3<\iy)\\
&\leq& \sum_{i=1}^\iy \sum_{k=0}^\iy
\frac{1}{1+\rho^i} \P_\rho(\tau>k-1, S_{k-1}\in C_1(i),\,\sigma_3<\iy)\\
&\leq& \sum_{i=1}^\iy \frac{1}{1+\rho^i} \E(\#\{\mbox{steps in }C_1(i) 
\mbox{ before }\tau\};\,\sigma_3<\iy)\\
&\leq& O\left(\frac{1}{\rho} \right) \E(\#\{\mbox{steps in }\cup_i C_1(i)
 \mbox{ before }\tau\};\,\sigma_3<\iy),
\end{eqnarray*}
where the inequality in the third line is again obtained by nested conditioning with respect to $\FF_{k-1}$ and the fact that on $\{S_{k-1}\in C_1(i)\}$ the difference in the counts of majority and minority colors is equal to $i$.

In an analogous fashion (we leave the details to the reader) one can obtain
$$
\P_\rho(\sigma_2< \tau \leq \sigma_3<\iy) \leq 
O\left(\frac{1}{\rho} \right) \E(\#\{\mbox{steps in }\cup_i C_2(i)
 \mbox{ before }\tau\};\,\sigma_3<\iy).
$$

\begin{lemme}
\label{L:unibd on tau}
Fix $p\in (0,1/2)$.
There exist $\rho_0<\iy$ such that for each $\rho > \rho_0$ we have
$$
\E_\rho(\tau \wedge \sigma_3 \cdot \indica{\sigma_3<\iy}) < C,
$$
where $C$ is a finite constant which depends only on $p$.
\end{lemme}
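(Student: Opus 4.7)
The plan is to use the trivial bound $\tau \wedge \sigma_3 \cdot \indica{\sigma_3<\iy} \leq \sigma_3 \cdot \indica{\sigma_3<\iy}$ and to decompose
\[
\E_\rho[\sigma_3 \cdot \indica{\sigma_3<\iy}] \leq \E_\rho[\sigma_2 \wedge \sigma_3] + \E_\rho[(\sigma_3 - \sigma_2)^+ \cdot \indica{\sigma_3<\iy}],
\]
and then to bound each summand by a constant depending only on $p$, uniformly in $\rho > \rho_0$.

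For the Phase 1 bound (the first summand), from any state $C_1(\ell)$ with $\ell\geq 1$ the $\P_\rho$-probability of a one-step transition to $C_2(\ell-1)$ equals $2(\alpha+p/2)(\beta+p/2)$, where $\alpha=(1-p)\rho^\ell/(\rho^\ell+1)$ and $\beta=(1-p)/(\rho^\ell+1)$; since each factor is at least $p/2$, this exit probability is bounded below by $p^2/2$, uniformly in $\ell$ and $\rho$. From $C_1(0)$, the exit to $C_3$ occurs with probability $1/2$. Therefore $\sigma_2\wedge\sigma_3$ is stochastically dominated by a geometric random variable with parameter $p^2/2$, yielding $\E_\rho[\sigma_2\wedge\sigma_3]\leq 2/p^2$ uniformly in $\rho$.

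For the Phase 2 bound, I would first treat the limiting case $\rho=\iy$. By strong Markov at $\sigma_2$, conditional on $S_{\sigma_2}=C_2(\ell_0)$, the $\ell$-index is under $\P_\iy$ a biased random walk on $\N\cup\{0\}$ with down/up probabilities $p,1-p$ for $\ell\geq 1$ and absorption at $C_3$ from $\ell=0$ with rate $(1+p)/2$. Standard calculations (the $h$-transform giving hitting-$0$ probability $(p/(1-p))^{\ell_0}$ with conditional hitting-time expectation $\ell_0/(1-2p)$, combined with a geometric decomposition of returns to $0$) yield
\[
\E_\iy[(\sigma_3-\sigma_2)\indica{\sigma_3<\iy}\mid C_2(\ell_0)] \leq c_p(1+\ell_0)\left(\frac{p}{1-p}\right)^{\ell_0},
\]
which is uniformly bounded in $\ell_0\in\N\cup\{0\}$ by some $c'_p<\iy$, thanks to the exponential decay $(p/(1-p))^{\ell_0}$ dominating the linear factor.

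The main obstacle is transferring this $\P_\iy$-bound to $\P_\rho$. Under $\P_\rho$, the true state in Phase 2 is a pair $(\ell_k,m_k)$ with $m_k$ the majority-color excess of urn 1 (growing by $1$ per step prior to any AI-draw), and the $\ell_k$-transitions differ from those of the biased walk above by probabilities of order $\rho^{-\min(\ell_k,m_k-\ell_k)}$---the excess mass being absorbed into the AI-draw event already bounded in the preceding estimates. I would couple the pre-$\tau$ $\P_\rho$-evolution of $\ell_k$ with a $\P_\iy$-biased-walk $Y_k$ so that they agree until a perturbation time whose total probability of occurring before $\sigma_3\wedge\tau$ is $O(1/\rho)$, and conclude that, for $\rho>\rho_0$, $\E_\rho[(\sigma_3-\sigma_2)\indica{\sigma_3<\iy}\mid C_2(\ell_0)]\leq 2c'_p$. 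An equivalent route is through the Lyapunov function $h_\iy(\ell):=\E_\iy[\sigma_3\indica{\sigma_3<\iy}\mid Y_0=\ell]$: its one-step $\P_\rho$-increments on the pre-$\tau$ Phase 2 event equal $-1+o_\rho(1)$, whence optional stopping at $\tau\wedge\sigma_3\wedge n$ together with monotone convergence closes the argument.
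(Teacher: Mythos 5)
Your Phase~1 bound coincides with the paper's: the $p^2/2$ lower bound on the one-step exit rate from $\cup_{\ell\geq 1}C_1(\ell)$ gives a geometric stochastic bound for $\sigma_2\wedge\sigma_3$ uniformly in $\rho$, yielding $\E_\rho[\sigma_2\wedge\sigma_3]\leq 2/p^2$. That part is fine and is exactly what the paper does.

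The Phase~2 transfer from $\P_\iy$ to $\P_\rho$ is the heart of the lemma, and both mechanisms you propose for it have genuine gaps. For the coupling sketch: the per-step transition probabilities under $\P_\rho$ differ from the $\P_\iy$ ones by $O(1/\rho)$, but the probability that the coupled pair disagrees at some point before $\sigma_3\wedge\tau$ is controlled by the \emph{number} of steps up to that time, not by a single factor of $1/\rho$. To turn the per-step error into a total error of $O(1/\rho)$ you would need to already know that the expected number of steps in Phase~2 is $O(1)$ uniformly in $\rho$ --- which is precisely the statement being proved, so the argument is circular as written. For the Lyapunov sketch: your function $h_\iy(\ell)=\E_\iy[\sigma_3\indica{\sigma_3<\iy}\mid Y_0=\ell]$ does \emph{not} have one-step drift $-1+o_\rho(1)$; a direct computation gives
\[
\E_\iy\bigl[h_\iy(Y_1)-h_\iy(Y_0)\,\big|\,Y_0=\ell\bigr]=-\phi(\ell),\qquad \phi(\ell):=\P_\iy(\sigma_3<\iy\mid Y_0=\ell),
\]
and $\phi(\ell)$ decays like $(p/(1-p))^\ell$; thus the drift is \emph{not} bounded away from $0$, and the proposed optional-stopping argument does not deliver a bound on $\E_\rho[(\sigma_3-\sigma_2)\indica{\sigma_3<\iy}]$.

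The paper addresses exactly this obstruction: it decomposes Phase~2 into Wald's-identity-compatible excursions of the $\ell$-index, observes that each $\P_\rho$-excursion probability is a $\P_\iy$-excursion probability distorted by a factor $(1+O(1/\rho))^{|\be|}$ with $|\be|$ the excursion length, and then controls $\E(\tau_0'\cdot(1+C/\rho)^{\tau_0'};\tau_0'<\iy\mid Y_0'=1)$ by first $h$-transforming the upward-biased walk into a downward-biased walk $Y''$, and then invoking the Galton--Watson correspondence to show that $\tau_0''$ has an exponential moment $\E(\lambda^{\tau_0''})<\iy$ for some $\lambda>1$. This exponential moment is what absorbs the multiplicative accumulation of errors once $\rho$ is large enough that $1+C/\rho<\lambda$, and there is no analogue of it in your write-up. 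Filling this gap would essentially reproduce the paper's proof.
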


Since, due to the above calculations,
$$
\P_\rho(F\cap \bar{A})=O\left(\frac{1}{\rho} \right) (1+ \E(\#\{\mbox{steps in }\cup_{j=1,2}\cup_i C_j(i) \mbox{ before }\tau\};\,\sigma_3<\iy),
$$
and the expectation on the right-hand side above is precisely 
$\E_\rho(\tau \wedge \sigma_3 \cdot \indica{\sigma_3<\iy})$, we obtained the required claim, and this completes the proof of Lemma \ref{L:two pts}, and in turn the proof of Theorem \ref{T:approx}.

\smallskip
\noindent
{\em Proof of Lemma \ref{L:unibd on tau}.}
Note that
$$
\E_\rho(\tau \wedge \sigma_3 \cdot \indica{\sigma_3<\iy}) \leq 
\E_\rho(\tau \wedge \sigma_2 \wedge \sigma_3) + \E_\rho((\tau \wedge \sigma_3 - \tau \wedge \sigma_2 \wedge \sigma_3) \,\indica{\sigma_3<\iy}).
$$
Again, bounding the first expectation on the right-hand side above is simpler. For this, note that on $\{k < \sigma_2 \wedge \sigma_3\}$ the two urns have a probability uniformly bounded from below by $2(p/2)^2=p^2/2>0$ for $\sigma_2$ to happen at step $k$, since for this it is sufficient that the draws for both urns are made from the two urns combined, and that the same (equally likely) color is chosen for both.
Therefore, as already mentioned, the random variable $\sigma_2 \wedge \sigma_3$ is stochastically bounded by a geometric random variable with success probability $p^2/2$. We conclude that
$\tau \wedge \sigma_2 \wedge \sigma_3$ has expectation bounded by $2/p^2$ under any $\P_\rho$, for $\rho\in (1,\iy]$.

To bound 
$ \E_\rho((\tau \wedge \sigma_3 - \tau \wedge \sigma_2 \wedge \sigma_3) \,\indica{\sigma_3<\iy})$, we use the fact that on $\{\sigma_2<\sigma_3\}$ 
(WLOG we can and will assume that this happens), under $\P_\rho$, the steps of the two urn model during the time interval $[\tau \wedge \sigma_2, \tau \wedge \sigma_3]$
go through the classes $C_2(i)$ according to the pattern illustrated by Figure 7.
More precisely, it is easily seen that 
\begin{equation}
\label{EenterC3}
\begin{array}{ll}
\P_{\rho}(\sigma_3=k+1\,|\, \FF_k) \indica{\tau > k,\, S_k \in C_2(0)}
\bck &
=\P_{\rho}(S_{k+1} \in C_3 \,|\, \FF_k) \indica{\tau > k,\, S_k \in C_2(0)}\\
& \geq \frac{1}{4}
\indica{\tau > k, S_k \in C_2(0)},
\end{array}
\end{equation}
uniformly in $k$, and moreover that for any $\ell \geq 1$ and any $k\in\N$.
on $\{\tau > k,\, S_k \in C_2(\ell)\}$
\begin{equation}
\label{EstepsC2}
\begin{array}{ll}
\P_{\rho}(S_{k+1} \in C_2(\ell+1),\tau>k+1 \,|\, \FF_k) =
(1-p) (1+O_{k,\ell+1}(\rho)),\\
\P_{\rho}(S_{k+1} \in C_2(\ell-1),\tau>k+1 \,|\, \FF_k) =
p (1+O_{k,\ell-1}(\rho)),
\end{array}
\end{equation}
where for each $k$ and all $\ell\in \N$, $O_{k,\ell+1}(\rho),O_{k,\ell-1}(\rho)$ 
are $\FF_k$-measurable random variables satisfying the following important 
property: there exists some $C<\iy$ such that 
\[
\P_\rho(O_{k,\ell-1}(\rho)\leq C/\rho)=\P_\rho(O_{k,\ell+1}(\rho)\leq C/\rho)=1, \ \forall k,\ell\in \N, \ \rho\in (1,\iy).
\]

Now fix some $\rho \in (1,\iy)$.
On $\{\sigma_2 < \tau \wedge \tau_3\}$, the two urns enter the set of configurations $\cup_\ell C_2(\ell)$
via $C_2(I)$, where $I$ is such that $S_{\sigma_2}\in C_2(I)$, $\P_\rho$-almost surely.
Due to the reasoning used in bounding $\E(\tau \wedge \sigma_2 \wedge \sigma_3)$, $I$ is stochastically bounded by a geometric random variable with success probability $p^2/2$.
Recall the process $Y_n$ defined in the paragraph which contains relation 
(\ref{eqn_r2}). 
Note that the same definition for $Y$ makes sense under $\P_\rho$ (at least) until time $\tau$, and recall the stopping time $\tau_0=\inf\{j\geq n:\, Y_{j\wedge \tau}=0\}$.

Due to (\ref{EenterC3}), there are at most geometric (with success probability $1/4$) many excursions of $Y_{\cdot \wedge \tau}$ above $0$ before $\sigma_3$.
The first excursion starts at $I$ and ends at $0$, while all the others start and end at $0$. 
Due to (\ref{EstepsC2}), all the excursions but (possibly) the first one 
are ``identically distributed up to an error of order
$(1+C/\rho)^{\text{excursion length}}$''.
By this we mean that for a fixed excursion path $\be$ away from $0$, the probability
that the $k$th excursion above takes value $\be$ equals 
$p(\be)x (1+g_k(\rho,\be))^{|\be|}$
where $p(\be)$ depends only on the steps in $\be$, $|\be|$ is the number of steps in 
$\be$, and $|g_k(\rho,\be)|\leq C/\rho$ for all $k,\rho,\be$. In the sequel, we will explain this even more precisely.
Due to Wald identity, we now know that for bounding 
$\E_\rho((\tau \wedge \sigma_3 - \tau \wedge \sigma_2 \wedge \sigma_3) \,\indica{\sigma_3<\iy})$, it suffices to show that
both 
\begin{equation}
\label{Etau from i}
\sum_i (1- p^2/2)^{i-1}\E_\rho(\tau_0;\tau_0<\iy|Y_0=i)<\iy
\end{equation}
and $\E(\tau_0;\tau_0<\iy|Y_0=0)<\iy$.
The excursion starting at $i$ and ending at $0$ can again be split into 
$i$ almost identically distributed excursions (due to (\ref{EstepsC2})),  
where the $j$th excursion starts at $i-j+1$ and ends at the time of 
the first visit to level $i-j$, $j=1,\ldots,i$.
Denote by $\tau_{j}$ the time of the first visit to level $j$.
Therefore, bounding $\E_\rho(\tau_0;\tau_0<\iy|Y_0=i)$ for $i\geq 1$ amounts to bounding
$\sum_{j=1}^i \E_\rho(\tau_{i-j}-\tau_{i-j+1};\tau_{i-j}<\iy|Y_0=i)$.

Now due to (\ref{EstepsC2}), we know that
\[
\E_\rho(\tau_{i-j}-\tau_{i-j+1};\tau_{i-j}<\iy|Y_0=i)
\leq \E(\tau_0'\cdot (1+C/\rho)^{\tau_0'};\tau_0'<\iy|Y_0'=1), 
\ 1\leq j \leq i,
\]
where $Y'$ is now the simple asymmetric random walk with $\P(\Delta Y' =1)=1-p=1-\P(\Delta Y' =-1)$ (as it is under $\P_\iy$) and $\tau_0'$ is its corresponding $\tau_0$.
With a finite upper bound $B$ for the expectation on the right-hand side in the last display, we would have
$\E_\rho(\tau_0;\tau_0<\iy|Y_0=i)\leq i \times B$ and this would make the sum in (\ref{Etau from i}) bounded by a multiple of $B$, that would depend only on $p$. 
Furthermore, since trivially $\E_\rho(\tau_0;\tau_0<\iy|Y_0=0)= 
1 + \E_\rho(\tau_0;\tau_0<\iy|Y_0=1)$ (the walk $Y$ is reflected at $0$), 
one sees that a uniform (in large $\rho$) upper bound on 
$$\E(\tau_0 \cdot (1+C/\rho)^{\tau_0'};\tau_0'<\iy|Y_0'=1)$$ 
is sufficient to conclude the argument.

The next step is to condition on $\tau_0'<\iy$. 
It is well-known (and easy to check) that the law of $Y'$ 
(simple asymmetric random walk with bias $1-2p>0$), conditioned on $\tau_0'<\iy$, 
becomes the law of the simple asymmetric random walk $Y''$ with bias $2p-1<0$, 
where $\P(\Delta Y'' =1)=p=1-\P(\Delta Y'' =-1)$.
Due to the well-known correspondence between the excursion of $Y''$ 
and the subcritical Galton-Watson branching (of geometric offspring distribution 
with success probability $(1-p)$ and mean $1/(1-p)-1<1$), 
one can easily see that there exists $\lambda>1$ such that
\[
\E(\lambda^{\tau_0''}) = \E(\lambda^{2 Z})<\iy,
\]
where $Z$ is the total population size of the above Galton-Watson process, 
started from a single individual.
Denote by $Z_m$ the total size od the first $m$ generations. Then,
by conditioning on the size of the first generation, one obtains that
\[
\E(\nu^{Z_m})= \nu \cdot (1-p) \cdot \frac{1}{1-p\cdot \E(\nu^{Z_{m-1}})}, \ m\geq 1, \, \nu>0.
\]
Consider 
$$
g= \nu \cdot (1-p) \cdot \frac{1}{1-p\cdot g},
$$
and note that 
if $\nu_0\equiv\nu_0(p)=(4p(1-p))^{-1}>1$ (recall that $p<1/2$) one can solve the above equation for all $\nu\in[0, \nu_0]$ to obtain a functional solution $g_p(\nu)$, continuously increasing in $\nu$.
Since clearly for $\nu\in (1,\nu_0]$ we have $\E(\nu^{Z_0})=\nu< g_p(\nu)$
and since $x<g_p(\nu)$ implies $(1-p) \cdot \frac{1}{1-p\cdot x}<g_p(\nu)$, one concludes that 
$\sup_m \E(\nu^{Z_m}) = \lim_m \E(\nu^{Z_m})$ is finite for $\nu\in [0,\nu_0]$, and furthermore that $g_p(\nu)$ must equal $\E(\nu^Z)$.

Now for the above $\nu_0$ we pick $\lambda\in (1,\sqrt{\nu_0})$, and then 
$\rho_0$ such that $1+C/\rho_0 < \lambda$.
Then there exists $n_0$ such that for all $\rho \geq \rho_0$
$$n (1+C/\rho)^n \leq n(1+C/\rho_0)^n \leq \lambda^n, \ \forall n\geq n_0,$$ 
and so we finally have that for all $\rho \geq \rho_0$ 
$$
\E(\tau_0'' \cdot (1+C/\rho)^{\tau_0''}) \leq n_0 \lambda^{n_0} + \E(\lambda^{\tau_0''})
= \lambda^{n_0} + G_p(\lambda^2)< \iy.
$$

\section{Concluding remarks}
The following remarks lead to natural open questions.

\begin{enumerate}
\item 
Suppose that $\rho=\iy$.
In the case of Interacting Urn Models with more than two urns, one can easily derive the law of the number of non-conformist urns (that is the number of urns that do not fixate on the majority color, see \cite{launay1}) when the number $U$ of urns is odd. 
Indeed, if $U$ is an odd number then the majority color in the $U$ urns combined  is necessarily decided at the very first step. Then all the urns that have chosen the majority color immediately fixate and and all the urns that chose the minority color are in configuration $C_2(1)$, and each eventually fixates on the majority color with probability
$$r_1(p)=\frac{p(1+p)}{(1-p)(2-p)},$$ independently of the others.

If the number $U$ of urns is even and strictly bigger than two, then we can still write the probability of fixation as the countable sum of all the urn paths until the majority color is decided but it seems more difficult to figure out an explicit formula for this expression (as the one given by Theorem \ref{theorem}).

\item 
Suppose that $\rho=\iy$.
If there are more than two colors and $U=2$, then generalizing our results is trivial. 
Indeed, if there are $C>2$ colors to start with, then the two urns draw the same color at the first step with probability $1/C$ and they draw two different colors with the remaining probability $(C-1)/C$. In the former case the model starts from configuration $C_3$ and the two urns fixate on the chosen color. In the latter case, the model starts in configuration $C_1(1)$ and the two urns fixate on the same color with probability $q_1(p)$.
If there are more than two urns, then the we cannot conclude much from the results of this paper.
\item
Theorem \ref{T:approx} can also be generalized for $U\geq 2$ and $C\geq 2$. In fact, the proof that $\lim_{\rho\to\infty}\P_\rho[A]=\P_\infty[A]$ is exactly the same (recall that $A$ is the set of all asymptotically possible urn paths). The proof that $\lim_{\rho\to\infty}\P_{\rho}[\bar{A}]=0$ needs to be adapted but can be done in the same fashion. The idea is that if an urn path is in $\bar{A}$ then it has an AI-draw, and its probability is at least $\rho$ times smaller than the same urn path without this AI-draw. Thus $\P_{\rho}[\bar{A}]=O(\rho^{-1})$ which proves the limit.
\end{enumerate}

\bibliographystyle{plain}
\bibliography{biblio}

\end{document}